\newtheorem{theorem}{Theorem}[section]
\newtheorem{lemma}[theorem]{Lemma}
\newtheorem{proposition}[theorem]{Proposition}
\theoremstyle{definition}
\newtheorem{definition}[theorem]{Definition}
\theoremstyle{remark}
\numberwithin{equation}{section}
\newcommand{\Soc}{\mathrm{Soc}}
\DeclareMathOperator{\rank}{rank}
\newcommand{\Tr}{\mathrm{Tr}}
\newcommand{\Rad}{\mathrm{Rad}}
\newcommand{\cl}{\mathrm{cl}}
\begin{document}
\title[The Shoda-completion of a Banach algebra]{ The Shoda-completion of a Banach algebra  }
\author{R. Brits \and F. Schulz}
\address{Department of Pure and Applied Mathematics, University of Johannesburg, South Africa}
\email{rbrits@uj.ac.za, francoiss@uj.ac.za}
\subjclass[2010]{15A60, 46H05, 46H10, 46H15, 47B10}
\keywords{rank, socle, trace, commutator}

\begin{abstract}
In stark contrast to the case of finite rank operators on a Banach space, the socle of a general, complex, semisimple, and unital Banach algebra $A$ may exhibit the `pathological' property that not all traceless elements of the socle of $A$ can be expressed as the commutator of two elements belonging to the socle. The aim of this paper is to show how one may develop an extension of $A$ which removes the aforementioned problem. A naive way of achieving this is to simply embed $A$ in the algebra of bounded linear operators on $A$, i.e. the natural embedding of $A$ into $\mathcal L(A)$. But this extension is so large that it may not preserve the socle of $A$ in the extended algebra $\mathcal L(A)$. Our proposed extension, which we shall call the Shoda-completion of $A$, is natural in the sense that it is small enough for the socle of $A$ to retain the status of socle elements in the extension.
\end{abstract}

\maketitle

\section{Introduction}

Throughout this paper $A$ will be a complex, semisimple, and unital Banach algebra with the unit denoted by $\mathbf 1$. The group of invertible elements of $A$ is denoted by $G(A)$, and  the principal component of $G(A)$ by $G_{\mathbf 1}(A)$. For an element $x$ in a Banach algebra $A$ we denote the spectrum of $x$ relative to $A$ by $\sigma_A(x)$, that is 
$$\sigma_A(x):=\{\lambda\in\mathbb C:\lambda\mathbf 1-x\notin G(A)\}.$$ 
The non-zero spectrum of $x$ relative to $A$ is defined to be $\sigma^\prime_A(x):=\sigma_A(x)\setminus\{0\}$. If the underlying algebra is clear from the context, then we shall agree to omit the subscript $A$ in the notation $\sigma_A(x)$ and $\sigma_A^\prime(x)$.
We also assume that $A$ has a non-trivial socle, i.e. $\Soc(A)\not=\{0\}$. The socle, first  introduced by J. Diuedonn\'e over 70 years ago, is a well-known object in Banach algebras and in more general algebraic structures (rings etc.). As in the papers \cite{structuresocle} and \cite{tracesocleident} we shall follow the relatively recent ``spectral'' point of view of the socle developed by Aupetit and Mouton in \cite{aupetitmoutontrace}; the associated notions of rank, trace, determinant etc. together with their properties will also be taken from \cite{aupetitmoutontrace}. Any further notation in the current paper follows \cite{aupetitmoutontrace, structuresocle, tracesocleident}, and, in particular, we shall denote the trace on the socle by $\Tr(\cdot)$. Central to our discussion will be the following:

\begin{definition}
Let $A$ be a semisimple Banach algebra with $\Soc(A)\not=\{0\}$. Then $A$ is called \emph{Shoda-complete} if every traceless element of $\Soc(A)$ can be expressed as a commutator of two elements belonging to $\Soc(A)$.
\end{definition}

Let $p$ be a projection (idempotent) of $A$ with $\mathrm{rank}\,(p) \leq 1$ (recall that rank one projections correspond to minimal projections). As in \cite{structuresocle} we denote by $J_{p}$ the two-sided ideal generated by $p$, that is, we define
$$J_{p} := \left\{ \sum_{j=1}^{n} x_{j}py_{j} : x_{j}, y_{j} \in A, n \in \mathbb{N} \right\}.$$ 

The following lemma, which appeared in \cite{tracesocleident}, will play an important role in the development of the subsequent theory: 

\begin{lemma}\label{s2.2}\cite[Lemma 3.5]{tracesocleident}
There exists a collection of two-sided ideals $\left\{J_{p} : p \in \mathcal{P} \right\}$ such that every element of  $ \Soc(A)$ can be written as a finite sum of members of the $J_{p}$. Moreover, the two-sided ideals are pairwise orthogonal, that is, if $p, q \in \mathcal{P}$ with $p \neq q$, then
$$J_{p}J_{q} = J_{q}J_{p} = \left\{0 \right\}.$$
\end{lemma}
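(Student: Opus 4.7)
The plan is to define an equivalence relation on the set of minimal projections capturing when they generate the same two-sided ideal of $A$, pick a transversal $\mathcal{P}$ for this relation, and then deduce both assertions from the minimality of the ideals $J_p$.

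First I would recall the standard fact that whenever $p$ is a minimal projection (i.e.\ $\rank(p)=1$), the corner $pAp$ is a division algebra, and from this it follows that $J_p=ApA$ is a minimal two-sided ideal of $A$: for any nonzero two-sided ideal $I\subseteq J_p$ one can select $a\in I$ with $pap\neq 0$, invert $pap$ inside $pAp$ to produce $p$ as an element of $I$, and thereby conclude $I=J_p$. With this in hand, declare two minimal projections equivalent, $p\sim q$, exactly when $J_p=J_q$, and let $\mathcal{P}$ consist of one representative from each $\sim$-class.

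For the first assertion, I would use that $\Soc(A)$ is the algebraic sum of all minimal left ideals of $A$, and that every minimal left ideal has the form $Aq$ with $q$ a minimal projection. Since $Aq\subseteq J_q=J_p$ for the unique representative $p\in\mathcal{P}$ with $q\sim p$, any $x\in\Soc(A)$, being a finite sum of elements drawn from minimal left ideals, is automatically a finite sum of members of the $J_p$'s indexed by $\mathcal{P}$. For the orthogonality, take distinct $p,q\in\mathcal{P}$; by construction $J_p\neq J_q$, and both are minimal two-sided ideals, so $J_p\cap J_q$ is a two-sided ideal strictly smaller than either, forcing $J_p\cap J_q=\{0\}$. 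Since $J_pJ_q\subseteq J_p\cap J_q$ and likewise $J_qJ_p\subseteq J_p\cap J_q$, both products vanish.

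The main technical obstacle is really the initial step, namely the verification that each $J_p$ is a minimal two-sided ideal; everything after that is essentially bookkeeping around the equivalence relation. Since this is only a sketch I would be content to take the minimality of $J_p$ as a standard consequence of $pAp$ being a division algebra, and focus on assembling $\mathcal{P}$ correctly.
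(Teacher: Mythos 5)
Your proof is correct and takes the standard route; the paper itself gives no proof of this lemma (it is imported from an earlier paper), but your key ingredient --- that $J_p$ is a minimal two-sided ideal whenever $\rank(p)=1$, because $pAp=\mathbb{C}p$ --- is precisely the paper's Lemma~\ref{s2.3}, and choosing a transversal $\mathcal{P}$ for the relation $J_p=J_q$ and then using $J_pJ_q\subseteq J_p\cap J_q=\{0\}$ by minimality is exactly the expected bookkeeping. The one step you leave implicit, namely that a nonzero two-sided ideal $I\subseteq J_p$ must contain some $a$ with $pap\neq 0$, does require the semisimplicity of $A$ (if $pIp=\{0\}$ one checks that the relevant products are nilpotent and concludes $a\in\Rad(A)=\{0\}$), but since you explicitly defer to the known minimality of $J_p$, which the paper also quotes as a black box, this is not a gap.
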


In \cite{structuresocle} it was shown that the $J_{p}$ are all minimal two-sided ideals:

\begin{lemma}\label{s2.3}\cite[Lemma 2.2]{structuresocle}
Let $p$ be a projection of $A$ with $\mathrm{rank}\,(p) \leq 1$. Then $J_{p}$ is a minimal two-sided ideal.
\end{lemma}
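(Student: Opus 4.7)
My plan is as follows. Let $I$ be any non-zero two-sided ideal of $A$ with $I \subseteq J_p$; the goal is to show $p \in I$, for then $J_p = ApA \subseteq AIA \subseteq I$ yields $I = J_p$. The case $\rank(p)=0$ forces $p=0$ and $J_p=\{0\}$, which is vacuous, so I shall assume $\rank(p)=1$. Two standard facts from the Aupetit--Mouton framework will be used without further comment: $pAp = \mathbb{C}\,p$, and the left ideal $Ap$ is minimal.

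The structural heart of the argument is to test $I$ against $Ap$ via $Ip$. For any $a \in A$ and $y \in I$ one has $a(yp)=(ay)p \in Ip$, so $Ip$ is a left ideal; since $Ip \subseteq Ap$ and $Ap$ is minimal, either $Ip=\{0\}$ or $Ip=Ap$. The second alternative is precisely what I want: it gives $p \in Ap = Ip$, so $p = yp$ for some $y \in I$, and the right-absorbency of $I$ then yields $p = yp \in I$.

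The whole problem thus reduces to excluding $Ip=\{0\}$. Suppose for contradiction it held, pick a non-zero $x \in I$, and, by the standard reduction in which a linear dependence $a_1 p = \sum_{j \geq 2} \alpha_j a_j p$ lets one absorb the first term into the others, arrange $x = \sum_{i=1}^{n} a_i p b_i$ with $\{a_i p\}_{i=1}^{n}$ linearly independent in $A$. For any $c \in A$, the element $xcp$ lies in $Ip=\{0\}$; expanding via $p b_i c p = \lambda_i(c)\,p$ (available because $pAp=\mathbb{C}\,p$) yields
$$0 \;=\; xcp \;=\; \sum_{i=1}^{n} \lambda_i(c)\, a_i p,$$
and linear independence forces $\lambda_i(c)=0$ for every $c \in A$ and every $i$, i.e., $p b_i A p = \{0\}$ for each $i$.

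The crux---and the only really delicate step---is to extract $p b_i = 0$ from $p b_i A p = \{0\}$, and this is exactly where semisimplicity earns its keep. I would verify that the two-sided ideal $N_i := A(pb_i)A$ is square-zero: a generic product $(c_1 p b_i d_1)(c_2 p b_i d_2)$ rearranges as $c_1\bigl(p b_i (d_1 c_2) p\bigr) b_i d_2 = 0$, since the middle factor lies in $p b_i A p = \{0\}$. But a semisimple Banach algebra contains no non-zero nilpotent two-sided ideals, because the elements of such an ideal are quasinilpotent and hence lie in $\Rad(A)=\{0\}$; therefore $N_i = \{0\}$ and $pb_i = 0$. Consequently each summand $a_i pb_i = a_i(pb_i) = 0$, whence $x = 0$, contradicting the choice of $x$. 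Everything else in the argument is book-keeping once one commits to playing $I$ against the minimal left ideal $Ap$; the semisimplicity-driven step ``$N_i^2 = \{0\} \Rightarrow N_i = \{0\}$'' is the only genuine obstacle.
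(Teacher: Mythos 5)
The paper does not prove this lemma; it is imported verbatim from \cite[Lemma 2.2]{structuresocle}, so there is no in-paper argument to compare yours against. Judged on its own, your proof is correct and self-contained: the reduction of minimality of $J_p$ to the dichotomy $Ip=\{0\}$ or $Ip=Ap$ via the minimal left ideal $Ap$ is sound, the linear-independence normalisation of $x=\sum a_pb_i$ is the standard one, and the passage from $pb_iAp=\{0\}$ to $pb_i=0$ via semisimplicity is exactly the right use of $\Rad(A)=\{0\}$. Two small remarks. First, the detour through the square-zero ideal $N_i=A(pb_i)A$ can be shortened: $pb_iAp=\{0\}$ gives $(pb_ia)^2=pb_i(ap)b_ia=0$ for every $a\in A$, so $\sigma(pb_ia)=\{0\}$ for all $a$ and $pb_i\in\Rad(A)=\{0\}$ directly, with no need to manufacture a nilpotent ideal. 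Second, your dismissal of the case $\rank(p)=0$ as ``vacuous'' deserves one more clause: in a semisimple algebra $\rank(p)=0$ forces $p=0$ (a rank-zero element generates only quasinilpotents $xp$, hence lies in the radical), so $J_p=\{0\}$, and the lemma is then to be read under the convention that the interesting assertion concerns rank one; as stated, $\{0\}$ is not a minimal ideal in the usual sense, so strictly the hypothesis $\rank(p)\leq 1$ should be understood as $\rank(p)=1$ in the nontrivial case. Neither point affects the validity of your argument.
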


Let $\mathcal{P}$ be a set of projections, as in Lemma \ref{s2.2}, generating $ \Soc(A)$. By the direct sum $\oplus_{p \in \mathcal{P}} Ap \otimes pA$ we denote the subset of the Cartesian product $\times_{p \in \mathcal{P}}Ap \otimes pA$ consisting of all cross sections which are zero except at a finite number of coordinate elements of $\mathcal{P}$, equiped with pointwise scalar multiplication and addition. By \cite[Corollary 2.5]{structuresocle} each $J_{p}$ is isomorphic as an algebra to $Ap \otimes pA$, where the isomorphism maps each elementary tensor $xp \otimes py$ to $xpy$. So by Lemma \ref{s2.2} it follows that:

\begin{theorem}\label{s2.4}\cite[Theorem 3.1]{structuresocle}
$ \Soc(A) \cong \oplus_{p \in \mathcal{P}} Ap \otimes pA$.
\end{theorem}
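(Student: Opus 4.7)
The plan is to assemble the coordinate-wise isomorphisms $\phi_p : Ap \otimes pA \to J_p$ (from \cite[Corollary 2.5]{structuresocle}, sending the elementary tensor $xp \otimes py$ to the product $xpy$) into a single map
\[
\Phi : \oplus_{p \in \mathcal{P}} Ap \otimes pA \to \Soc(A), \qquad \Phi\bigl((t_p)_{p \in \mathcal{P}}\bigr) := \sum_{p \in \mathcal{P}} \phi_p(t_p).
\]
The sum is finite because each cross section has finite support, so $\Phi$ is well-defined. Additivity and $\mathbb{C}$-linearity are immediate; multiplicativity (with respect to the pointwise product on the direct sum) follows by expanding $\Phi((t_p))\,\Phi((s_p)) = \sum_{p,q} \phi_p(t_p)\phi_q(s_q)$ and killing every cross term with $p \neq q$ via the pairwise orthogonality $J_p J_q = \{0\}$ from Lemma \ref{s2.2}.

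Surjectivity of $\Phi$ is a direct restatement of Lemma \ref{s2.2}: every element of $\Soc(A)$ equals a finite sum $\sum_p z_p$ with $z_p \in J_p$, and each $z_p$ is $\phi_p(t_p)$ for some $t_p$. The substantive step is injectivity, which I would deduce from the claim that the internal sum $\sum_{p \in \mathcal{P}} J_p$ is direct:
\[
J_{p_0} \cap \sum_{p \neq p_0} J_p = \{0\} \qquad \text{for every } p_0 \in \mathcal{P}.
\]
The intersection is a two-sided ideal contained in $J_{p_0}$, which is minimal by Lemma \ref{s2.3}; so it is either $\{0\}$ or all of $J_{p_0}$. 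In the latter case $p_0 \in J_{p_0}$ could be written as a finite sum $\sum_{p \neq p_0} z_p$ with $z_p \in J_p$; right-multiplying by $p_0$ and using orthogonality would force $p_0 = p_0^2 = \sum_{p \neq p_0} z_p p_0 = 0$, contradicting $p_0$ being a nonzero projection. With the sum direct, if $\Phi((t_p)) = 0$ then each $\phi_p(t_p)$ lies in $J_p \cap \sum_{q \neq p} J_q = \{0\}$, and injectivity of $\phi_p$ gives $t_p = 0$.

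I expect the only non-routine ingredient to be the directness of $\sum_p J_p$, where the key trick is combining minimality (Lemma \ref{s2.3}) with orthogonality (Lemma \ref{s2.2}) to rule out hidden cancellations between different $J_p$'s; without minimality one would only be able to say that the $J_p$ annihilate one another, not that they are linearly independent. Once this is in hand, the theorem follows by stitching together the local isomorphisms $\phi_p$ in the obvious way.
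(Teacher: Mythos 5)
Your proof is correct and follows essentially the same route the paper indicates: it assembles the coordinate isomorphisms $Ap\otimes pA\cong J_p$ from \cite[Corollary 2.5]{structuresocle} using Lemma \ref{s2.2} for surjectivity and orthogonality for multiplicativity, exactly as the text sketches before stating Theorem \ref{s2.4}. The only detail you add beyond the paper's two-line derivation is the explicit verification that $\sum_p J_p$ is direct (via minimality from Lemma \ref{s2.3}), which the paper leaves implicit in its citation; that argument is sound.
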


Another useful result appears in \cite{structuresocle}:

\begin{theorem}\label{b3.3}\cite[Theorem 3.3]{structuresocle}
Let $\left\{b, a_{1}, \ldots, a_{n}\right\}$ be a linearly independent set of rank one elements in $A$. Then there exists $y \in A$ such that $\sigma (by) \neq \left\{0 \right\}$ and $\sigma \left(a_{i}y\right) = \left\{0 \right\}$ for each $i \in \left\{1, \ldots, n \right\}$.
\end{theorem}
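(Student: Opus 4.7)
I would reduce the theorem to a linear-algebraic separation via the Aupetit--Mouton trace. For any rank one $u \in A$ and $y \in A$, the inclusion $uyu \in uAu \subseteq \mathbb{C}u$ forces $(uy)^2 \in \mathbb{C}(uy)$, whence $\sigma(uy) \subseteq \{0, \Tr(uy)\}$; thus $\sigma(uy) = \{0\}$ iff $\Tr(uy) = 0$, and the conclusion becomes: produce $y \in A$ with $\Tr(a_i y) = 0$ for every $i$ and $\Tr(by) \neq 0$.

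For each $x \in A$ let $\phi_x \in A^*$ be defined by $\phi_x(y) := \Tr(xy)$. By standard linear-algebra duality (a linear functional outside a finite-dimensional span is separated from it by a vector), such a $y$ exists iff $\phi_b$ does not lie in $\mathrm{span}\{\phi_{a_1}, \ldots, \phi_{a_n}\}$. Suppose, for contradiction, that $\phi_b = \sum_{i=1}^n c_i \phi_{a_i}$, and set $c := b - \sum_i c_i a_i$; then $c \in \Soc(A)$ is nonzero by linear independence of $\{b, a_1, \ldots, a_n\}$, yet $\Tr(cy) = 0$ for every $y \in A$. The task is thereby reduced to proving that the trace pairing on the socle is non-degenerate in the first variable: a nonzero $c \in \Soc(A)$ admits some $y \in A$ with $\Tr(cy) \neq 0$.

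For the non-degeneracy, I would apply Lemma \ref{s2.2} and Theorem \ref{s2.4} to write $c = \sum_k c_k$ with $c_k \in J_{p_k}$ for distinct $p_k \in \mathcal{P}$; pairwise orthogonality of the ideals $J_{p_k}$, together with additivity of $\Tr$, shows that testing on $y \in J_p$ isolates the matching summand, so the problem reduces to the case $c \in J_p \cong Ap \otimes pA$. There a nonzero $c$ corresponds to a nonzero tensor $\sum_j \xi_j \otimes \eta_j$ with $\xi_j \in Ap$, $\eta_j \in pA$, and choosing $y = \xi' \eta'$ turns $\Tr(cy)$ into a bilinear expression in $(\xi', \eta')$ via the identification $pAp = \mathbb{C}p$.

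The main obstacle is this final isolation step: after arranging the $\xi_j$'s to be linearly independent, one must find $\eta'$ with $\eta' \xi_j = \delta_{j,j_0} p$ for a chosen index $j_0$, which relies on the minimality of the left ideal $Ap$ (a consequence of semisimplicity and the rank-one property of $p$, cf.\ Lemma \ref{s2.3}). With such $\eta'$ in hand, a suitable $\xi'$ makes the surviving term nonzero, contradicting $\Tr(cy) = 0$ and completing the argument.
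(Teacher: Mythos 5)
This theorem is imported into the paper as a citation (\cite[Theorem 3.3]{structuresocle}) with no proof given here, so there is no in-paper argument to compare against; judged on its own terms, your proposal is a correct and essentially complete trace-duality proof. The reduction $\sigma(uy)=\{0\}\Leftrightarrow\Tr(uy)=0$ for rank one $u$, the linear-algebra separation of $\phi_b$ from $\mathrm{span}\{\phi_{a_1},\dots,\phi_{a_n}\}$, and the reduction of non-degeneracy to a single $J_p$ via orthogonality are all sound. Two steps should be made explicit rather than implicit. First, your opening inclusion $uAu\subseteq\mathbb{C}u$ is not the definition of rank one in the Aupetit--Mouton framework the paper works in (rank is defined spectrally via $\sup_x\#\sigma'(xu)$); the identification of spectral rank one with the classical single-element property needs to be invoked from \cite{aupetitmoutontrace} or \cite{puhltrace}. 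Second, the existence of $\eta'\in pA$ with $\eta'\xi_j=\delta_{j,j_0}p$ requires non-degeneracy of the pairing $pA\times Ap\to pAp=\mathbb{C}p$, and this is exactly where semisimplicity enters: if $pz\,xp=0$ for all $z\in A$, then $(AxpA)^2=\{0\}$, so the ideal $AxpA$ is nilpotent and hence zero, forcing $xp=0$; linear independence of the $\xi_j$ then makes the functionals $\eta\mapsto\eta\xi_j$ independent on $pA$, which is what you need. It is also worth noting that the final non-degeneracy step admits a shortcut that bypasses the $J_p$ decomposition entirely: for $0\neq c\in\Soc(A)$, semisimplicity yields $x\in A$ with $\sigma(cx)\neq\{0\}$, and if $q$ is the Riesz projection of $cx$ at some $0\neq\lambda\in\sigma(cx)$ (a finite rank projection), then $\Tr(cxq)$ is a positive integer multiple of $\lambda$, so $y=xq$ already witnesses $\Tr(cy)\neq 0$; this is the same device the paper uses in the proof of its final theorem.
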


The papers \cite{structuresocle} and \cite{tracesocleident} also contain a number of characteristic properties of Shoda-complete Banach algebras, some of which will be useful for the current paper; we give a brief summary:

\begin{theorem}[Shoda-completeness]\cite{structuresocle, tracesocleident}\label{sshodachar} Let $A$ be a semisimple Banach algebra with $\Soc(A)\not=\{0\}$. Then $A$ is Shoda-complete if and only if any one the following conditions hold:
	\begin{itemize}
		\item[(i)]{$\Soc(A)$ is a minimal two-sided ideal of $A$.}
		\item[(ii)]{$\Soc(A)=J_p$ for some rank one projection $p$.}
		\item[(iii)]{If $f$ is a linear functional on $\Soc(A)$ such that $f(ab)=f(ba)$ for all $a,b\in\Soc(A)$, then $f=\alpha\Tr$ for some $\alpha\in\mathbb C$.}
		\item[(iv)]{$pAp\cong M_n(\mathbb C)$ for every projection $p$ with $\rank(p)=n$.}
	\end{itemize}			
\end{theorem}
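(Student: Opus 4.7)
My plan is to show that all four conditions, as well as Shoda-completeness itself, are equivalent to the structural property that the index set $\mathcal{P}$ from Lemma~\ref{s2.2} is a singleton, i.e., the orthogonal decomposition $\Soc(A)=\bigoplus_{p\in\mathcal{P}}J_p$ collapses to $\Soc(A)=J_p$ for a single rank-one projection $p$. The three unifying tools will be this decomposition, the identification in Theorem~\ref{s2.4} of each $J_p$ as a ``matrix-like'' algebra of finite-rank operators on the $\mathbb{C}$-vector space $Ap$, and the classical Shoda commutator theorem for matrix algebras.

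The equivalence (i)~$\Leftrightarrow$~(ii) is immediate from Lemma~\ref{s2.3}: a minimal $\Soc(A)$ must coincide with one of the minimal ideals $J_p$ it contains, and the converse is trivial. For Shoda-completeness~$\Leftrightarrow$~(ii) I would argue the forward direction contrapositively. If there exist distinct $p,q\in\mathcal{P}$, the element $p-q$ is traceless (rank-one projections have trace $1$); supposing $p-q=[a,b]$ with $a,b\in\Soc(A)$, write $a=\sum_r a_r$ and $b=\sum_r b_r$ with $a_r,b_r\in J_r$, whence pairwise orthogonality $J_rJ_s=\{0\}$ forces $[a,b]=\sum_r[a_r,b_r]$. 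Comparing $p$-components yields $p=[a_p,b_p]\in J_p$, and applying $\Tr$ gives $1=0$, a contradiction. The reverse direction follows by invoking Shoda's theorem inside the matrix-like algebra $J_p=\Soc(A)$ supplied by Theorem~\ref{s2.4}.

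For (iii), the space of linear functionals on $\Soc(A)$ vanishing on all commutators is dual to $\Soc(A)/[\Soc(A),\Soc(A)]$; by the orthogonal decomposition this quotient splits as $\bigoplus_{p\in\mathcal{P}} J_p/[J_p,J_p]$. Each summand is one-dimensional because in a matrix-like algebra the commutator subspace has codimension one (the trace is, up to scalar, the unique trace-like functional). Hence the full space of trace-like functionals has dimension $|\mathcal{P}|$, which equals one precisely when (ii) holds.

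For (iv), the direction (ii)~$\Rightarrow$~(iv) uses that any projection $q$ of rank $n$ lies in $\Soc(A)=J_p$ and, since the socle is a two-sided ideal, $qAq=qJ_pq\cong M_n(\mathbb{C})$ by the matrix-like structure of $J_p$. Conversely, given (iv) and a traceless $a\in\Soc(A)$, a standard support-projection construction produces $q\in\Soc(A)$ with $qaq=a$; applying (iv) and classical Shoda then expresses $a=[x,y]$ with $x,y\in qAq\subseteq\Soc(A)$. The main obstacle I anticipate lies in the (ii)~$\Rightarrow$~(iv) direction: it requires the full operator-theoretic content of Theorem~\ref{s2.4} (in particular, that $J_p$ really acts as finite-rank operators on $Ap$) to verify that arbitrary rank-$n$ compressions recover $M_n(\mathbb{C})$ and not some smaller subalgebra.
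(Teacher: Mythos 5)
The paper does not actually prove this theorem: it is imported from \cite{structuresocle} and \cite{tracesocleident} as background, so there is no in-paper argument to compare yours against. That said, your strategy of reducing every condition to the statement that the index set $\mathcal{P}$ in the orthogonal decomposition $\Soc(A)=\oplus_{p\in\mathcal{P}}J_p$ is a singleton is precisely the structural framework the paper sets up in Lemmas \ref{s2.2}, \ref{s2.3} and Theorem \ref{s2.4}, and your sketch is essentially sound: the contrapositive argument that $p-q$ is traceless yet cannot be a commutator (orthogonality forces $[a,b]=\sum_r[a_r,b_r]$, and the trace kills $[a_p,b_p]$ while $\mathrm{Tr}(p)=1$) is correct, as is the identification of tracial functionals with the dual of $\oplus_p J_p/[J_p,J_p]$. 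Three points need tightening. First, when $\mathcal{P}$ is infinite the space of tracial functionals is the product $\prod_{p}\mathbb{C}$, not a space of dimension $|\mathcal{P}|$; the conclusion (one-dimensional iff $\mathcal{P}$ is a singleton) survives, but your dimension count as stated is off. Second, both the reverse direction of ``Shoda-complete $\Leftrightarrow$ (ii)'' and the implication (iv) $\Rightarrow$ Shoda-completeness rest on the same reduction --- compress a traceless $a\in\Soc(A)$ to $qAq$ for a finite-rank support projection $q$, check that the Aupetit--Mouton spectral trace agrees with the matrix trace under the isomorphism $qAq\cong M_n(\mathbb{C})$, and only then invoke classical Shoda --- so carry this out once explicitly instead of gesturing at ``Shoda's theorem in the matrix-like algebra,'' which as literally stated concerns $M_n(\mathbb{C})$ and not the finite-rank operators on an infinite-dimensional space $Ap$. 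Third, the obstacle you rightly flag in (ii) $\Rightarrow$ (iv) is resolved by the fact quoted in the proof of Lemma \ref{sminlem1}: for rank-one projections $e,f$ one has $\dim(eAf)\leq 1$, with equality exactly when both lie in the same $J_p$. Writing $q=\sum_{i=1}^{n}q_i$ as a sum of mutually orthogonal rank-one projections, condition (ii) forces every $q_i$ into the single ideal $J_p$, so the spaces $q_iAq_j$ are all one-dimensional and serve as matrix units, giving $qAq\cong M_n(\mathbb{C})$; without (ii) some $q_iAq_j$ vanish and $qAq$ degenerates into a proper direct sum of smaller matrix algebras, which is exactly how (iv) detects failure of Shoda-completeness.
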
	

The canonical example of a Shoda-complete Banach algebra is $\mathcal L(X)$, but many other Banach algebras do not have this property. For instance, consider 
$$A:=M_n(\mathbb C)\oplus M_k(\mathbb C),\ \ (n,k\in\mathbb N)$$ where $M_n(\mathbb C)$ denotes the $n\times n$ complex matrices. Then $A$ is a Banach algebra under pointwise operations. It is not hard to find traceless elements of $A$ which cannot be expressed as the commutator of two elements of $A$; so $A$ is not Shoda-complete. But the algebra $B:=M_{n+k}(\mathbb C)$, which is Shoda-complete, contains an algebraically isomorphic copy of $A$. In fact, with a little effort, one may show that $B$ is the smallest semisimple extension of $A$ such that every traceless element of $A$ becomes a commutator. By the classical Wedderburn-Artin Theorem we therefore know how to extend a finite dimensional algebra to a Shoda-complete algebra. The aim of this paper is to show that this can be done for Banach algebras with infinite dimensional socles as well. However, it should be emphasized that it will be desirable that the Shoda-completion preserves the socle in the following sense: The image of socle elements in $A$ must be socle elements in the Shoda-completion of $A$. This immediately rules out the option of simply embedding $A$ in $\mathcal L(A)$ (see the remark after Theorem \ref{s3.5}). To address the problem of finding such a Shoda-completion, we start with a section on minimal and inessential ideals, and obtain some new results which will be used in the forthcoming construction; almost all of these statements are of independent interest as well. In Section~\ref{saextend} we algebraically extend $A$ to an algebra which is Shoda-complete. Then, in Section~\ref{snextend}, we introduce a submultiplicative norm on the algebraic extension which extends the norm on $A$. The subsequent norm completion is unfortunately not necessarily semisimple. However, because of the location of the radical, the quotient algebra turns out to be a semisimple Banach algebra extension of $A$ which is simultaneously Shoda-complete.

 \section{Minimal and Inessential Ideals}
 
 If $\mathcal{P}$  is a fixed set of projections generating $ \Soc(A)$, then Theorem~\ref{b3.3} can be used to show that for each rank one element $a \in \Soc(A)$, there exists a unique $p \in \mathcal{P}$ such that $a \in J_{p}$. In fact, more can be said if $a$ is a rank one projection: 
 
 \begin{lemma}\label{sminlem1}
 	If $q$ is a rank one projection belonging to $J_p$ then $q\in ApA$. 
 \end{lemma}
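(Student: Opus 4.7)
The plan is to exhibit $x,y\in A$ with $q=xpy$, reading $ApA$ as the set of elementary products $\{apb:a,b\in A\}$ (the set of finite sums being precisely $J_p$). The key structural ingredient is that, as a rank one projection, $q$ is a minimal idempotent ($qAq=\mathbb{C}q$), so that $Aq$ is a minimal left ideal of $A$. This minimality is what will ultimately let us collapse a sum into a single product.

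The first step, and the main obstacle, is to show $pAq\neq\{0\}$. Suppose to the contrary that $paq=0$ for every $a\in A$. Then for all $a,b\in A$,
$$(ap)(bq)=a(pbq)=0,$$
so $(Ap)(Aq)=\{0\}$. Using $q\in J_p$ write $q=\sum_j x_j p y_j$, and apply $q=q^2$ to get
$$q=q\cdot q=\sum_j (x_jp)(y_jq)\in(Ap)(Aq)=\{0\},$$
contradicting $q\neq 0$. This step genuinely needs both $q\in J_p$ and the idempotence $q^2=q$; it is really the only non-routine part of the argument.

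Having secured some $a\in A$ with $u:=paq\neq 0$, observe that $u=(pa)q\in Aq$. The left ideal $Au$ is non-zero (it contains $u=1\cdot u$) and is contained in the minimal left ideal $Aq$, hence $Au=Aq$. In particular $q\in Au$, so there exists $b\in A$ with $q=bu=b\cdot p\cdot(aq)$. Taking $x=b$ and $y=aq$, both elements of $A$, yields $q=xpy\in ApA$, completing the proof. Note that the argument never invokes Theorem~\ref{b3.3} or the tensor product description; the minimality of $Aq$ combined with the non-vanishing of $pAq$ is enough.
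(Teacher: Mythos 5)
Your proof is correct, but it takes a genuinely different route from the paper's. Both arguments pivot on the same key observation: writing $q=\sum_j x_jpy_j$ and using $q=q^2$ shows that $pAq$ cannot vanish (the paper uses the symmetric version, deducing that both $qAp$ and $pAq$ are nontrivial). From there the mechanisms diverge. The paper cites the fact that $\dim(qAp)\leq 1$ and $\dim(pAq)\leq 1$ for rank one projections $p,q$ (Puhl), concludes both dimensions equal $1$, and uses this one-dimensionality to collapse the sum $q=\sum_i(qx_ip)(py_iq)$ into a single product $q=qxpyq$. You instead avoid collapsing a sum at all: having found $u=paq\neq 0$, you note $0\neq Au\subseteq Aq$ and invoke minimality of the left ideal $Aq$ (valid since a rank one projection is a minimal idempotent, and $A$ is unital so $q\in Aq$) to get $Au=Aq\ni q$, whence $q=b\,p\,(aq)$. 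Your argument is more one-sided and slightly more elementary in that it needs only the minimality of $Aq$ rather than the two-sided dimension facts, at the cost of being a little less explicit about the shape of the factorization ($q=(qx)p(yq)$ in the paper, with both outer factors anchored at $q$). Your closing remark that you avoid Theorem~\ref{b3.3} is true but not a point of contrast: the paper's proof of this lemma does not use it either. Both proofs are sound.
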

 
 \begin{proof}
 	It is well-known and easy to prove that if $p,q$ are rank one projections then $\dim(qAp)\leq 1$ and $\dim(pAq)\leq 1$ (see for instance \cite[Theorem 4.2]{puhltrace}). But $q\in J_p$ forces 
 	$\dim(qAp)=1=\dim(pAq)$. It thus follows that
 	\[q=\sum_{i=1}^nx_ipy_i\Rightarrow q=\sum_{i=1}^n(qx_ip)(py_iq)\Rightarrow q=qxpyq\in ApA,\]
 	for some $x$ and $y$ in $A$.
 \end{proof}
 
 \begin{lemma}\label{sminlem2}
 	If $q$ is a rank one projection belonging to $J_p$ then $q\in G_{\mathbf 1}(A)pG_{\mathbf 1}(A).$ 
 \end{lemma}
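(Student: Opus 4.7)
My plan is to bootstrap Lemma~\ref{sminlem1} into a conjugation statement $q = upu^{-1}$ with $u \in G_{\mathbf 1}(A)$, from which $q \in G_{\mathbf 1}(A)\, p\, G_{\mathbf 1}(A)$ follows immediately. First, I apply Lemma~\ref{sminlem1} to obtain $x, y \in A$ with $q = qxpyq$, and set $a := qxp \in qAp$, $b := pyq \in pAq$, so that $ab = q$. Since $p$ is a rank one projection, $pAp = \mathbb{C}p$, hence $ba = \lambda p$ for some scalar $\lambda$; the identity $q = q^{2} = abab = a(ba)b = \lambda ab = \lambda q$ forces $\lambda = 1$, i.e.\ $ba = p$. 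Thus the pair $(a, b)$ realises the classical ``partial isometry'' equivalence of the rank one idempotents $p$ and $q$.

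The heart of the proof is the construction of the conjugating invertible $u$. In the orthogonal case $pq = qp = 0$, the identities $pa = aq = bp = qb = 0$ and $a^{2} = b^{2} = 0$ drop out of $a = qap$, $b = pbq$, and $pq = qp = 0$. A direct expansion then shows that $u := \mathbf{1} - p - q + a + b$ satisfies $u^{2} = \mathbf{1}$ and $upu = q$. To place $u$ in the principal component I would check (by another short calculation in the same orthogonal setting) that $e := \tfrac{1}{2}(p + q - a - b)$ is itself an idempotent; then $u = \mathbf{1} - 2e = \exp(i\pi e)$ is the endpoint of the continuous exponential path $t \mapsto \exp(i\pi t e)$ starting at $\mathbf{1}$, so $u \in G_{\mathbf 1}(A)$.

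For general $p, q$ the expansion of $u^{2}$ acquires nonzero cross terms and the closed form breaks, so I would switch to a deformation argument: construct a holomorphic family $\lambda \mapsto p(\lambda)$ of rank one projections in $J_{p}$ with $p(0) = p$ and $p(1) = q$ --- for example as the spectral idempotent of $(1 - \lambda) p + \lambda q$ attached to the eigenvalue nearest to $1$ --- and invoke the classical Aupetit principle that any holomorphic family of idempotents on a simply connected domain is conjugate, through a holomorphic family $u(\lambda) \in G_{\mathbf 1}(A)$, to its value at a basepoint; evaluating at $\lambda = 1$ then yields $q = u(1) p u(1)^{-1}$ with $u(1) \in G_{\mathbf 1}(A)$. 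The main obstacle I anticipate is precisely this non-orthogonal case: neither the explicit involution nor the exponential recipe extends verbatim, and the delicate point in the deformation route is guaranteeing that $p(\lambda)$ remains a rank \emph{one} projection throughout --- not an idempotent of possibly larger rank --- which ultimately rests on the structure of the minimal ideal $J_{p}$ supplied by Theorem~\ref{s2.4} together with the upper-semicontinuity of rank.
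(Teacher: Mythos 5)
Your algebraic reduction is correct and your orthogonal case is complete: from Lemma~\ref{sminlem1} you get $a\in qAp$, $b\in pAq$ with $ab=q$, $ba=p$, and the computations $u^{2}=\mathbf 1$, $upu=q$, $e^{2}=e$, $u=\mathbf 1-2e=\exp(i\pi e)\in G_{\mathbf 1}(A)$ all check out. But the general case --- which is the actual content of the lemma --- is left to a deformation whose proposed realization fails. On the span of the two ranges, $(1-\lambda)p+\lambda q$ has trace $1$ and determinant $\lambda(1-\lambda)\left(1-\Tr(pq)\right)$, so its two candidate nonzero eigenvalues are the roots of $\mu^{2}-\mu+\lambda(1-\lambda)\left(1-\Tr(pq)\right)=0$, whose discriminant at $\lambda=\tfrac12$ equals $\Tr(pq)$. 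Whenever $\Tr(pq)=0$ (true for every orthogonal pair, but also for non-orthogonal pairs such as $p=E_{11}$, $q=E_{22}+E_{21}$ in $M_{2}(\mathbb C)$) the two branches collide at $\lambda=\tfrac12$, so ``the eigenvalue nearest to $1$'' is not a single-valued holomorphic function on any simply connected domain containing $[0,1]$; moreover the Riesz projections either blow up near the collision or are locally constant along each branch --- for $p=E_{11}$, $q=E_{22}$ the branch through $\lambda=0$ carries the constant projection $E_{11}$ and never reaches $q$. A further warning sign is that your general-case argument nowhere uses the hypothesis $q\in J_{p}$ (the elements $a,b$ are never fed into the deformation), yet the conclusion is false without that hypothesis, so no argument of this shape can close the gap.

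The paper's fix is precisely at this point. Instead of deforming $p$ to $q$ directly, it uses $q=xpy$ from Lemma~\ref{sminlem1} together with \cite[Theorem 2.2]{aupetitmoutontrace} to replace $xp$ by a rank one \emph{projection} $zxp$ with $z\in G_{\mathbf 1}(A)$. The segment $f(\lambda)=(1-\lambda)p+\lambda zxp$ then lies entirely in $Ap$, hence consists of rank $\le 1$ elements whose spectrum is $\left\{0,\Tr(f(\lambda))\right\}$; off the closed discrete set where $\Tr(f(\lambda))=0$, the normalization $f(\lambda)/\Tr(f(\lambda))$ is an honest path of rank one idempotents joining $p$ to $zxp$, and Zem\'anek's theorem gives $zxp=vpv^{-1}$ with $v\in G_{\mathbf 1}(A)$. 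The same argument on the right yields $py\tilde z=wpw^{-1}$, and then $zq\tilde z=(zxp)(py\tilde z)=vpv^{-1}wpw^{-1}=\mu vpw^{-1}$ for some $0\neq\mu\in\mathbb C$ because $pv^{-1}wp\in\mathbb Cp$, whence $q\in G_{\mathbf 1}(A)pG_{\mathbf 1}(A)$. The idea you are missing is to keep the connecting family inside the rank one ``column'' $Ap$ (and separately the ``row'' $pA$), where constancy of rank and idempotency of the trace-normalized family are automatic; your explicit involution remains a nice elementary shortcut for the orthogonal case only.
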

 
 \begin{proof}
 	If $p = \mathbf{1}$, then $A = \mathbb{C}$. But then $q = p$ and we are done. We may therefore assume that $p \notin G(A)$. From the preceding result we have that $q=xpy$ for some $x,y\in A$. By \cite[Theorem 2.2]{aupetitmoutontrace} we can find 
 	$z\in G_{\mathbf 1}(A)$ such that $zxp$ is a rank one projection. Now consider the entire function $f:\mathbb C\to Ap$ defined by \[f(\lambda)=(1-\lambda)p+\lambda zxp.\] We observe that
 	$\#\sigma(f(\lambda))\leq 2$ for all $\lambda\in\mathbb C$. Moreover, $\#\sigma(f(\lambda))=2$ is actually attained for some $\lambda\in\mathbb C$. Aupetit's Scarcity Theorem \cite[Theorem 3.4.25]{aupetit1991primer} says that the set
 	\[D=\{\lambda\in\mathbb C:\#\sigma(f(\lambda))=1\}\] is closed and discrete in $\mathbb C$. If we notice that $D=\{\lambda\in\mathbb C: \Tr\left(f(\lambda)\right)=0\}$, and we define 
 	$g:\mathbb C\setminus D\to Ap$ by \[g(\lambda)=\frac{f(\lambda)}{\Tr(f(\lambda))}\] then it follows that we can find a path of projections connecting the two projections $p$ and $zxp$. By a result of Zem\'{a}nek \cite[Theorem 3.3]{idempotba} we infer that $zxp=vpv^{-1}$ for some $v\in G_{\mathbf 1}(A)$. In exactly the same manner we can find $\tilde z\in G_{\mathbf 1}(A)$  such that $py\tilde z=wpw^{-1}$ for some $w\in G_{\mathbf 1}(A)$. Together we see that
 	$zq\tilde z\in G_{\mathbf 1}(A)pG_{\mathbf 1}(A)$. Since $G_{\mathbf 1}(A)$ is a group we can solve for $q$ to get the result.   
 \end{proof}
 
 \begin{theorem}\label{ssimorb}
 	 A rank one projection $q$ belongs to $J_p$ if and only if $q$ belongs to the similarity orbit of $p$, that is, $q \in E_{p}:= \left\{upu^{-1}: u \in G_{\mathbf 1}(A)\right\}$.   
 \end{theorem}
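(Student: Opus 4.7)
One direction, $E_p \subseteq J_p$, is trivial: if $q = u p u^{-1}$ with $u \in G_{\mathbf 1}(A)$, then the right-hand side is already of the form $\sum x_i p y_i$, so $q \in J_p$. All the content lies in the converse, for which my plan is to invoke Lemma \ref{sminlem2} to write $q = u p v$ with $u, v \in G_{\mathbf 1}(A)$, and then reduce the problem to showing that $q$ is $G_{\mathbf 1}(A)$-similar to the conjugate $p_1 := u p u^{-1}$; transitivity then yields $q \in E_p$. Setting $c := uv \in G_{\mathbf 1}(A)$, the decomposition becomes $q = p_1 c$. Squaring and cancelling $c$ on the right produces the key algebraic identity $p_1 c p_1 = p_1$, from which the two incidence relations
\[
q p_1 = (p_1 c)\, p_1 = p_1 \qquad \text{and} \qquad p_1 q = p_1 (p_1 c) = p_1 c = q
\]
follow at once.

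With these relations in hand, the main observation is that the affine segment
\[
h(\lambda) := (1 - \lambda) p_1 + \lambda q, \qquad \lambda \in [0,1],
\]
is a continuous path of rank one projections from $p_1$ to $q$. The identity $p_1 q + q p_1 = q + p_1$, together with $p_1^2 = p_1$ and $q^2 = q$, gives $h(\lambda)^2 = h(\lambda)$ by a one-line expansion, and $\Tr(h(\lambda)) = (1-\lambda) + \lambda = 1$ confirms that every $h(\lambda)$ remains a rank one projection. We are then in exactly the situation treated by Zem\'anek's theorem \cite[Theorem 3.3]{idempotba} — the same tool already employed in the proof of Lemma \ref{sminlem2} — which delivers $q = w p_1 w^{-1}$ for some $w \in G_{\mathbf 1}(A)$. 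Composing with $p_1 = u p u^{-1}$ gives $q = (wu)\, p\, (wu)^{-1} \in E_p$, completing the proof.

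The only non-routine step is the realisation that the ``two-sided'' similarity $q = u p v$ supplied by Lemma \ref{sminlem2} can be upgraded to a genuine conjugation by first rotating $p$ to $p_1$, so that the residual factor $q = p_1 c$ is one-sided and immediately forces the clean incidences $p_1 q = q$, $q p_1 = p_1$. Once this is seen, the convex-combination-to-Zem\'anek argument essentially runs itself, and I do not anticipate any technical obstacle beyond verifying the identity $p_1 c p_1 = p_1$ from the projection condition on $q$.
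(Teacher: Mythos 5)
Your proof is correct, and it takes a genuinely different route from the paper's. Both arguments start from Lemma \ref{sminlem2} and finish with Zem\'anek's theorem, but the middle is where you diverge: the paper writes the two invertibles as products of exponentials, deforms $q=e^{x_1}\cdots e^{x_n}pe^{y_1}\cdots e^{y_k}$ through the entire function $f(\lambda)=e^{\lambda x_1}\cdots e^{\lambda x_n}pe^{\lambda y_1}\cdots e^{\lambda y_k}$, and invokes the Scarcity Theorem to control the discrete set where $\Tr(f(\lambda))$ vanishes, so that the normalized family $f(\lambda)/\Tr(f(\lambda))$ yields a path of projections. You instead make the purely algebraic reduction $q=p_1c$ with $p_1=upu^{-1}$ and $c=uv\in G_{\mathbf 1}(A)$, extract $p_1cp_1=p_1$ from $q^2=q$ by cancelling the invertible $c$, and then the incidence relations $p_1q=q$, $qp_1=p_1$ make the affine segment $h(\lambda)=(1-\lambda)p_1+\lambda q$ an explicit path of idempotents (nonzero since $\Tr(h(\lambda))=1$). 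This eliminates the analytic machinery entirely: no exponential parametrization, no Scarcity Theorem, no trace normalization. The trade-off is that the paper's analytic template is the same one already used in Lemma \ref{sminlem2} and is reused elsewhere (e.g.\ in the connectedness characterization of $\mathcal R_n$), so it keeps the exposition uniform; your argument is shorter and more elementary for this particular statement. One small point worth making explicit if you write this up: the degenerate case $p=\mathbf 1$ (so $A=\mathbb C$) should be noted as trivial, as the paper does, though your computation in fact survives it since $q=p_1c$ with $p_1=\mathbf 1$ forces $c=c^2=\mathbf 1$.
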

 
 \begin{proof}
 	As before we may exclude the case where $p =\mathbf{1}$ and assume that $p \notin G(A)$. With the preceding lemma we can write 
 	\[q=e^{x_1}\cdots e^{x_n}pe^{y_1}\cdots e^{y_k}\] where the $x_i,y_i\in A$. We now define 
 	the entire function $f:\mathbb C\to \Soc(A)$ by 
 	\[f(\lambda)=e^{\lambda x_1}\cdots e^{\lambda x_n}pe^{\lambda y_1}\cdots e^{\lambda y_k},\] and we notice that for each $\lambda\in\mathbb C,$ $f(\lambda)$ is a rank one element. As in the preceding lemma we observe	$\#\sigma(f(\lambda))\leq 2$ for all $\lambda\in\mathbb C$, and that $\#\sigma(f(\lambda))=2$ is actually attained for some $\lambda\in\mathbb C$. The Scarcity Theorem together with the argument in Lemma~\ref{sminlem2} guarantees the existence a path of projections connecting the two projections $p$ and $q$. Hence by \cite[Theorem 3.3]{idempotba} we obtain $q=upu^{-1}$ for some $u\in G_{\mathbf 1}(A)$. 
 \end{proof}
 
 \begin{theorem}
 	All minimal left ideals contained in $J_p$ are isomorphic as Banach algebras. Similarly all minimal right ideals contained in $J_p$ are isomorphic as Banach algebras.
 \end{theorem}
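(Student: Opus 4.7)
The plan is to represent every minimal left ideal contained in $J_p$ as $Aq$ for a suitable rank one projection $q$, and then to invoke Theorem \ref{ssimorb} to conjugate $p$ into $q$ by an element of $G_{\mathbf 1}(A)$. The resulting inner automorphism of $A$ restricts to a Banach algebra isomorphism $Ap \to Aq$, which handles the left-ideal case; the right-ideal case is handled by the same inner automorphism applied to $pA$.

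First I would recall the standard fact that in a semisimple Banach algebra with non-trivial socle, every minimal left ideal $L$ has the form $L = Aq$ for some minimal idempotent $q$, which under the spectral notion of rank is a rank one projection. If in addition $L \subset J_p$, then $q = q^{2} \in Aq \subset J_p$, so $q$ is a rank one projection lying in $J_p$. Similarly, every minimal right ideal in $J_p$ has the form $qA$ for such a $q$.

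Next I would apply Theorem \ref{ssimorb} to obtain $u \in G_{\mathbf 1}(A)$ with $q = upu^{-1}$, and define $\Phi \colon A \to A$ by $\Phi(x) = uxu^{-1}$. Then $\Phi$ is a Banach algebra automorphism of $A$ sending $p$ to $q$; consequently $\Phi(Ap) = \Phi(A)\Phi(p) = Aq$ and $\Phi(pA) = qA$. The restriction of $\Phi$ to $Ap$ (respectively $pA$) is therefore a Banach algebra isomorphism onto $Aq$ (respectively $qA$). Since every minimal left ideal (respectively right ideal) contained in $J_p$ is, in this way, isomorphic to the fixed $Ap$ (respectively $pA$), pairwise isomorphism of any two such ideals follows by composing two such restrictions.

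I do not anticipate a real obstacle here, as Theorem \ref{ssimorb} does the essential heavy lifting and the remainder is a routine verification that inner automorphisms are bicontinuous on $A$ and restrict appropriately. The one point I would take care with is to check that $Ap$ and $pA$ really are closed subalgebras of $A$, so that the phrase \emph{Banach algebra isomorphism} is unambiguous: closure under the inherited multiplication follows from $(ap)(bp) = (apb)p \in Ap$ (and the symmetric statement on the right), while norm-closedness follows from continuity of right (respectively left) multiplication by $p$.
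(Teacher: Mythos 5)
Your proposal is correct and follows essentially the same route as the paper: represent the minimal left ideal as $Aq$ for a rank one projection $q\in J_p$, use Theorem \ref{ssimorb} to write $q=upu^{-1}$ with $u\in G_{\mathbf 1}(A)$, and restrict the conjugation $x\mapsto uxu^{-1}$ to $Ap$. The only cosmetic difference is that the paper verifies multiplicativity of this restriction by an explicit trace computation, whereas you obtain it directly from the fact that conjugation is an algebra automorphism, which is a slightly cleaner way to phrase the same step.
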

 
 \begin{proof}
 	The minimal left ideal $Ap$ is contained in $J_p$. Let $I$ be any minimal left ideal in $J_p$. Then $I=Aq$ for some rank one projection $q$. Obviously $q\in J_p$ and so Lemma~\ref{ssimorb} says that $q=vpv^{-1}$ for some $v\in G_{\mathbf 1}(A)$. Since $v$ is invertible we have
 	\[Aq=Avpv^{-1}=Apv^{-1}=vApv^{-1}.\] Define $T_q:Ap\to Aq$ by 
 	$T(xp)=vxpv^{-1}$. Then $T_q$ is obviously linear, $T_q$ is surjective since $Aq=vApv^{-1}$, and $T_q$ is injective since $v$ is invertible. To show that  $T_q$ is multiplicative:
 	\begin{align*}
 	T_q((xp)(yp))&=\Tr(pyp)T_q(xp)\\&=
 	\Tr(pyp)vxpv^{-1}\\&=
 	vx\Tr(pyp)pv^{-1}\\&=
 	vxpypv^{-1}\\&=
 	vxpv^{-1}vypv^{-1}\\&=
 	T_q(xp)T_q(yp).
 	\end{align*}
 	This gives the result, and a similar argument works for minimal right ideals.
 \end{proof}
 
 If $A=\mathcal L(X)$, where $X$ is a Banach space, then it is well-known that $\Soc(A)$ is a minimal two-sided ideal of $A$, and in fact that $\Soc(A)\cong X\otimes X^\prime$ where $X^\prime$ is the dual space of $X$. The next result shows that $X$ can be identified (as a Banach space) with any minimal left ideal and $X^\prime$ with any minimal right ideal: 
 
 \begin{theorem}\label{s3.5}
  Let $A=\mathcal L(X)$, where $X$ is a Banach space. If $p\in A$ is any rank one projection, then 
  $$Ap\cong X\;\,\mathrm{and}\;\,pA\cong X'\;\,\mathrm{(as \;Banach\; spaces)}.$$
 \end{theorem}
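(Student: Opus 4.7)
The plan is to use the explicit parametrization of rank one projections in $\mathcal{L}(X)$ and then write down the obvious candidate isomorphisms to $X$ and $X'$.

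First I would recall that any rank one projection $p\in\mathcal{L}(X)$ can be written in the form $p(x)=f(x)\,e$ for some $e\in X\setminus\{0\}$ and some $f\in X'$ satisfying $f(e)=1$ (the vector $e$ spans the range of $p$, and $f$ is the functional obtained by dualizing the inclusion of this line into $X$). This representation reduces the study of $Ap$ and $pA$ to explicit rank one operators.

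Next, to treat $Ap\cong X$, I would compute: for any $T\in A$, $(Tp)(x)=T(f(x)\,e)=f(x)\,T(e)$, so every element of $Ap$ is of the form $x\mapsto f(x)\,y$ with $y=T(e)\in X$; conversely, for any $y\in X$, choosing any bounded operator $T$ with $T(e)=y$ (for instance $T(x)=g(x)y$ with $g\in X'$ satisfying $g(e)=1$) produces $Tp:x\mapsto f(x)y$ in $Ap$. I would then define
\[\Phi:X\to Ap,\qquad \Phi(y)(x):=f(x)\,y,\]
which is clearly linear and, by the previous observation, bijective. The norm computation $\|\Phi(y)\|=\|f\|\,\|y\|$ shows $\Phi$ is bounded with bounded inverse, hence a topological (Banach space) isomorphism. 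For $pA\cong X'$ I would run the symmetric argument: $(pT)(x)=f(T(x))\,e=(f\circ T)(x)\,e$, and as $T$ ranges over $A$ the composition $f\circ T$ ranges over all of $X'$ (given $g\in X'$, take $T(x)=g(x)e$ so that $f\circ T=g$). Setting
\[\Psi:X'\to pA,\qquad \Psi(g)(x):=g(x)\,e,\]
yields a linear bijection satisfying $\|\Psi(g)\|=\|e\|\,\|g\|$, hence again a Banach space isomorphism.

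There is not really a conceptual obstacle here: once $p$ is written as $e$ tensored with $f$, the left ideal $Ap$ is naturally ``parametrized'' by the first factor and the right ideal $pA$ by the second factor. The only item requiring a small amount of care is verifying surjectivity of the two maps, i.e.\ exhibiting, for each $y\in X$ (respectively each $g\in X'$), an operator $T\in\mathcal{L}(X)$ with $T(e)=y$ (respectively $f\circ T=g$); in both cases a suitable rank one $T$ does the job, as indicated above. Since the norm identities $\|\Phi(y)\|=\|f\|\|y\|$ and $\|\Psi(g)\|=\|e\|\|g\|$ give continuous inverses automatically, no appeal to the open mapping theorem is needed.
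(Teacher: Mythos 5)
Your proof is correct and follows essentially the same route as the paper: both arguments hinge on writing $p$ as $x\mapsto f(x)e$ with $f(e)=1$ and observing that $Tp$ and $pT$ are then parametrized by $T(e)\in X$ and $f\circ T\in X'$ respectively. The one genuine (if small) difference is at the final step: the paper only establishes the one-sided bound $\|\Omega(sp)\|\leq\|u_p\|\,\|sp\|$ for its map $\Omega:Ap\to X$ and then invokes the Open Mapping Theorem to get a continuous inverse, whereas you work with the inverse maps $\Phi(y)=f\otimes y$ and $\Psi(g)=g\otimes e$ and use the exact rank-one norm identities $\|\Phi(y)\|=\|f\|\,\|y\|$ and $\|\Psi(g)\|=\|e\|\,\|g\|$, which show directly that both maps are scalar multiples of isometries. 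This makes your version slightly more elementary and more informative (you get explicit two-sided norm bounds rather than mere topological isomorphism), at no cost; the paper's version buys nothing extra here. Both proofs are complete.
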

 
 \begin{proof}
 Since $p$ has rank one, there exist $0\neq f_p\in X^\prime$ and $0\neq u_p\in X$ such that $p(x)=f_p(x)u_p$ for each $x\in X$. From this it follows that for each $s\in A$ we have that $(sp)(x)=f_p(x)s\left(u_p\right)$ for $x\in X$. We are therefore naturally led to the map $\Omega: Ap\rightarrow X$ defined by $\Omega(sp)=(sp)\left(u_{p}\right)$. It is clear that $\Omega$ is linear and injective. To see that $\Omega$ is surjective, let $v\in X$ be arbitrary and take $t\in A$, where $t(x)= f_{p}(x)v$ for each $x \in X$. Then $t\left(u_{p}\right) = v$, and the surjectivity follows. Furthermore, observe that  
 $$\left\|\Omega(sp)\right\| = \left\| \left(sp\right)\left(u_{p}\right) \right\| \leq\left\|u_{p}\right\|\left\|sp\right\|.$$
 So it follows, from the Open Mapping Theorem, that $Ap\cong X$ as Banach spaces. For the isomorphism $pA\cong X^\prime$, we consider the map $\Gamma:pA\rightarrow X^\prime$ defined by $\Gamma(ps)=f_p\circ ps$. It is trivial that $\Gamma$ is linear and injective. To show that $\Gamma$ is surjective, note that $f_p\left(u_{p}\right)=1$ and let $s_f\in A$ be defined by $s_f(x) =f(x)u_{p}$ for each $x \in X$, where $f\in X^\prime$. It then follows that $\Gamma(ps_f)=f$. To prove continuity, simply note that
 $$\left\|\Gamma(ps)\right\|\leq\left\|f_p\right\|\left\|ps\right\|.$$ So the isomorphism  $p A\cong X^\prime$ follows from the Open Mapping Theorem.
 \end{proof}
 
 \textbf{Remark.} Let $X$ be any infinite Banach space, and consider $A := \mathcal L(X) \oplus \mathcal L(X)$, where the operations are all pointwise. In particular, we note that $A$ is a semisimple Banach algebra with identity element. Fix any rank one projection $p$ in $\mathcal L(X)$. Then $(p, 0)$ and $(0, p)$ are both rank one projections of $A$. Moreover, $J_{(p, 0)}$ and $J_{(0, p)}$ are mutually orthogonal minimal two-sided ideals. Consequently, $\Soc(A)$ is not Shoda-complete. Isometrically and algebraically embed $A$ in $\mathcal L(A)$ using right multiplication. Then $A(p, 0) \cong \left(X, 0\right)$ as Banach spaces by Theorem \ref{s3.5}. Hence, $A(p, 0)$ is infinite-dimensional; so the image of $(p, 0)$ is not a socle element in $\mathcal L(A)$. This shows that $\mathcal L(A)$ is not a suitable candidate for a Shoda-completion of $A$. \\
 
The paper \cite{structuresocle} contains various algebraic characterizations of Shoda-completeness. Theorem~\ref{ssimorb} can be used to give a topological characterization:
 \begin{theorem}
 	$A$ is Shoda-complete if and only if any one of the following holds:
 	\begin{itemize}
 		\item[(i)]{The collection of rank one projections is connected.}
 		\item[(ii)]{ For each $n\in\mathbb N$, $\mathcal R_n:=\{a\in\Soc(A):\rank(a)=n\}$ is connected.}
 		\item[(iii)]{$\mathcal R_1$ is connected. }
 	\end{itemize}
 \end{theorem}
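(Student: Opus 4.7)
My plan is to prove all three conditions equivalent to Shoda-completeness using Theorem \ref{sshodachar}(ii), by which Shoda-completeness is equivalent to $\Soc(A)=J_p$ for some rank one projection $p$, i.e.\ to $|\mathcal P|=1$ in Lemma \ref{s2.2}. For (i), Theorem \ref{ssimorb} gives the partition of the rank one projections into the similarity orbits $E_p=\{upu^{-1}:u\in G_{\mathbf 1}(A)\}$; each $E_p$ is connected as a continuous image of the path-connected set $G_{\mathbf 1}(A)$, and each is open in the space of projections by \cite[Theorem 3.3]{idempotba} (projections sufficiently close to one another are $G_{\mathbf 1}(A)$-similar, via a small Newburgh path). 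The rank one projections thus form a disjoint union of nonempty clopen connected pieces, and the whole set is connected iff $|\mathcal P|=1$.

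For (iii), the heart of the argument is that each $\mathcal R_1\cap J_p$ is both closed and open in $\mathcal R_1$, so that the decomposition $\mathcal R_1=\bigsqcup_{p\in\mathcal P}(\mathcal R_1\cap J_p)$ gives a disconnection whenever $|\mathcal P|\geq 2$. For closedness, if $a_n\in\mathcal R_1\cap J_p$ and $a_n\to a\in\mathcal R_1$, then by the orthogonality in Lemma \ref{s2.2} we have $a\cdot J_q=0$ for every $q\neq p$; decomposing $a=\sum_s a_s$ via Theorem \ref{s2.4} this forces $a_q\cdot J_q=0$, after which a tensor-product computation in $Aq\otimes qA$ combined with the non-degeneracy of Theorem \ref{b3.3} forces $a_q=0$ for every $q\neq p$. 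For openness, given $a=xpy\in\mathcal R_1\cap J_p$ I will build a rank one projection $q\in J_p$ with $\Tr(aq)\neq 0$ by taking $q=(x'/c)py'$ with $c=\Tr(y'x'p)$, where $(x',y')\in A\times A$ is chosen so that the three continuous (bi)linear forms $\Tr(yx'p)$, $\Tr(y'xp)$, $\Tr(y'x'p)$ are simultaneously nonzero; since each of them is nonzero by Theorem \ref{b3.3}, each of their zero sets is closed and nowhere dense in $A\times A$, so the three conditions are jointly satisfiable on an open dense set. With such a $q$, a direct computation yields $\Tr(aq)=\Tr(yx'p)\Tr(y'xp)/c\neq 0$, and the open set $\{b\in\mathcal R_1:\Tr(bq)\neq 0\}$ contains $a$ while, by orthogonality ($bq\in J_rJ_p=0$), avoiding every $\mathcal R_1\cap J_r$ with $r\neq p$. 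The reverse direction is immediate: when $\Soc(A)=J_p$, the set $\mathcal R_1=\mathcal R_1\cap J_p$ equals the continuous image under $(x,y)\mapsto xpy$ of the connected set $\{(x,y)\in A\times A:xp\neq 0,py\neq 0\}$ (the complement of two proper closed subspaces in a complex Banach space).

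For (ii), the implication (ii) $\Rightarrow$ Shoda-complete follows from the $n=1$ specialization and (iii). For the converse, when $\Soc(A)=J_p$, every rank $n$ element may be written as $\sum_{i=1}^n x_ipy_i$ via the tensor structure of Theorem \ref{s2.4}, and I would show $\mathcal R_n$ is path-connected by connecting two such sums through the path-connected left-right action of $G_{\mathbf 1}(A)\times G_{\mathbf 1}(A)$ combined with a Scarcity-Theorem path argument in the spirit of Lemma \ref{sminlem2} and Theorem \ref{ssimorb}, reducing each rank $n$ element to a fixed standard one such as a sum of $n$ mutually orthogonal rank one projections in the orbit $E_p$.

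The main obstacle will be the openness step in (iii) when $a$ is a rank one quasinilpotent: the naive candidate idempotent $e=ay/\Tr(ay)$ produced from Theorem \ref{b3.3} always gives $\Tr(ae)=\Tr(a)\Tr(ay)/\Tr(ay)=\Tr(a)=0$, so a single application of Theorem \ref{b3.3} cannot detect $a$. The three-trace construction above circumvents this by decoupling the idempotent normalization $\Tr(y'x'p)=c$ from the trace conditions $\Tr(yx'p),\Tr(y'xp)\neq 0$ that read off the actual presence of $a$ in $J_p$, and I expect this simultaneous nondegeneracy choice to be the most delicate part of the proof.
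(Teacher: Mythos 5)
Your treatment of (i) coincides in substance with the paper's (both rest on Theorem \ref{ssimorb} together with Zem\'anek's theorem \cite[Theorem 3.3]{idempotba}), and your handling of (iii) is correct but genuinely different in two places. For the openness of $J_p\cap\mathcal R_1$ the paper does not construct a separating functional at all: it observes that $\left(\Soc(A)\setminus J_p\right)\cap\mathcal R_1$ is closed by the very same limit argument used for closedness ($a_nxax\to(ax)^2$ with $a_nxax=0$ by orthogonality, whence $a=0$ by semisimplicity), so the complement is closed and openness is free. Your three-trace construction of an idempotent $q$ with $\Tr(aq)\neq 0$ is a valid alternative --- note that $b\mapsto\Tr(bq)=\Tr(qbq)$ is continuous precisely because $qAq$ is one-dimensional --- but it costs you the additional fact that every rank one element of $J_p$ can be written as $xpy$, which you should justify from the tensor description of $J_p$. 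Your converse for (iii), exhibiting $\mathcal R_1$ as the continuous image of the connected set $\{(x,y):xp\neq0,\ py\neq0\}$ under $(x,y)\mapsto xpy$, is actually slicker than the paper's route, which derives even the $n=1$ case from its analytic-path machinery.

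The genuine gap is in the forward direction of (ii), which you only sketch, and the sketch as stated would not go through. A rank $n$ element of $\Soc(A)$ need not lie in the $G_{\mathbf 1}(A)\times G_{\mathbf 1}(A)$-orbit of a sum of $n$ mutually orthogonal rank one projections --- nilpotent rank $n$ elements, and more generally elements that are not ``maximal finite rank,'' obstruct this --- so ``reducing each rank $n$ element to a fixed standard one'' is precisely the step that fails, and if it did succeed no Scarcity argument would be needed since invertible conjugation preserves rank. The paper's mechanism is different: it first uses the density theorem \cite[Theorem 2.2]{aupetitmoutontrace} to choose $u\in G_{\mathbf 1}(A)$ making $au$ and $bu$ maximal finite rank, writes $au=\sum\alpha_iq_i$ and $bu=\sum\beta_ip_i$ with distinct nonzero eigenvalues and pairwise orthogonal rank one projections, expresses each $p_i$ and $q_i$ as a product of exponentials conjugating $p$ via Theorem \ref{ssimorb}, interpolates analytically in both the eigenvalues and the exponents, and then invokes the Scarcity Theorem for rank \cite[Theorem 2.4]{aupetitmoutontrace} to remove the closed discrete set of parameters where the rank drops below $n$; the arcs joining $a$ to $au$ and $b$ to $bu$ are produced the same way. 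You would need to supply this (or an equivalent) argument to close (ii).
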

 \begin{proof}
 	(i) If $A$ is Shoda complete then by Theorem~\ref{sshodachar} there exists a rank one projection $p$ such that $\Soc(A)=J_p$. So any rank one projection $q$ belongs to $J_p$, which, by Theorem~\ref{ssimorb}, means that $q=upu^{-1}$ for some $u\in G_{\mathbf 1}(A)$. The result then follows from \cite[Theorem 3.3]{idempotba}. Conversely, if the rank one projections is a connected space then, by \cite[Theorem 3.3]{idempotba}, there exists some rank one projection $p$ such that any rank one projection $q$ takes the form $q=upu^{-1}$ for some $u\in G_{\mathbf 1}(A)$. By Theorem~\ref{s2.4}, and the remark preceding it, it then follows that $\Soc(A)=J_p$. So $A$ is Shoda-complete by Theorem~\ref{sshodachar}.\\
 	(ii) Let $A$ be Shoda-complete and suppose $a,b\in\mathcal R_n$. Pick $u\in G_{\mathbf 1}(A)$ such that
 	$au$ and $bu$ are maximal finite rank elements. By \cite[Theorem 2.8]{aupetitmoutontrace} we can write 
 	$$bu=\beta_1p_1+\cdots+\beta_np_n\mbox{ and }au=\alpha_1q_1+\cdots+\alpha_nq_n,$$
 	where the $\beta_i$ are distinct nonzero complex numbers and the $p_i$ are pairwise orthogonal rank one projections. A similar statement is valid for the $\alpha_i$ and the $q_i$. Fix $1\leq i\leq n$. Since $A$ is Shoda-complete $\Soc(A)=J_p$ for some rank one projection $p$. Thus, by Theorem~\ref{ssimorb} we can find a set $\{x_1,\dots,x_k\}\subset A$ and a set $\{y_1,\dots,y_l\}\subset A$ such that
 	 $$p_i=e^{x_1}\cdots e^{x_k}pe^{-x_1}\cdots e^{-x_k}\mbox{ and }q_i=e^{y_1}\cdots e^{y_l}pe^{-y_1}\cdots e^{-y_l}.$$ Without loss of generality we may assume
 	 $k\geq l$. Define $f_i:\mathbb C\to A$ by
 	 $$f_i(\lambda)=[(1-\lambda)\beta_i+\lambda\alpha_i]\left[\prod_{j=1}^ke^{(1-\lambda)x_j+\lambda y_j}\right]p\left[\prod_{j=1}^ke^{(\lambda-1)x_j-\lambda y_j}\right]$$
 	 where $y_j=0$ if $j>l$. Then define $g:\mathbb C\to A$ by 
 	 $$g(\lambda)=\sum_{i=1}^nf_i(\lambda).$$ The function $g$ is analytic from $\mathbb C$ into $\Soc(A)$ and satisfies $g(0)=bu$, $g(1)=au$. Since the rank is subadditive we have that
 	 $\rank(g(\lambda))\leq n$ for all $\lambda\in\mathbb C$ with $\rank(bu)=\rank(au)=n$. It follows from Aupetit and Mouton's Scarcity Theorem for rank \cite[Theorem 2.4]{aupetitmoutontrace} that $\{\lambda\in\mathbb C:\rank(g(\lambda))<n\}$ is closed and discrete in $\mathbb C$. So there exists an arc $\mathcal C$ in $\mathbb C$, each of whose members has rank $n$, connecting $bu$ and $au$. Since $u\in G_{\mathbf 1}(A)$ a similar argument implies that $b$ and $bu$ (as well as $a$ and $au$) can be connected by an arc consisting of rank $n$ elements. This suffices to prove that $\mathcal R_n$ is connected. For the converse suppose that $A$ is not Shoda-complete. Then we can find at least two distinct minimal two-sided ideals generated by rank one projections, say $J_p$ and $J_q$, which are orthogonal and properly contained in $\Soc(A)$. We shall use this to show that $\mathcal R_1$ is not connected. It suffices to show that $J_p\cap\mathcal R_1$ is both open and closed in $\mathcal R_1$. To see that $J_p\cap\mathcal R_1$ is closed, let $a_n$ be a sequence in $J_p\cap\mathcal R_1$ which converges to $a\in\mathcal R_1$. For the sake of a contradiction suppose that $a \notin J_p\cap\mathcal R_1$. Then $\lim a_n=a\in J_q$, where $J_{p}J_{q} = \left\{0 \right\}$. Hence, for each $x\in A$ we have that $\lim a_nxax=(ax)^2$. But $a_nxax=0$, for each $n$, means that $(ax)^2=0$ from which the Spectral Mapping Theorem together with the semisimplicity of $A$ then imply that $a=0\notin\mathcal R_1$. Thus, $J_p\cap\mathcal R_1$ is closed in $\mathcal R_1$. Similarly, $\left(\Soc(A) - J_p\right)\cap\mathcal R_1$ is closed, and so, $J_p\cap\mathcal R_1$ is open in $\mathcal R_1$. Thus, if $A$ is not Shoda-complete then $\mathcal R_1$ is not connected.\\
 	 (iii) In view of the preceding argument this is now trivial. 
 \end{proof}

The next result is a generalization of a well-known fact in the operator case: The Riesz projections corresponding to the isolated spectral values of a compact operator $T\in\mathcal L(X)$ are finite rank operators. Theorem~\ref{sgencomp} will be used in the proof of our main result later on.

\begin{theorem}\label{sgencomp}
	Let $J$ be an inessential ideal of $A$. If $s\in J$ then the Riesz projections of $s$ corresponding to nonzero spectral values have finite rank.
\end{theorem}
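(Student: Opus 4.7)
My plan is twofold: first, to exhibit the Riesz projection as an element that visibly lies in $J$ via a Peirce-corner inverse argument; second, to invoke the characterizing property of an inessential ideal to force finiteness of the rank.

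Fix an isolated $\lambda\in\sigma(s)\setminus\{0\}$ and a small positively oriented contour $\Gamma$ enclosing $\lambda$ but no other point of $\sigma(s)$. Define
\[ p:=\frac{1}{2\pi i}\int_\Gamma (z\mathbf 1 - s)^{-1}\, dz.\]
The holomorphic functional calculus immediately yields $p^2=p$, $ps=sp$, and, regarding $pAp$ as a unital Banach algebra with identity $p$, $\sigma_{pAp}(ps)=\{\lambda\}$. Since $\lambda\neq 0$, the element $ps$ is invertible in $pAp$; let $u\in pAp$ denote its Peirce-corner inverse. Then $p=u(ps)=usp\in AJA\subseteq J$, because $s\in J$ and $J$ is a two-sided ideal. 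Thus the Riesz projection itself belongs to $J$.

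The remaining step—the main obstacle—is to show that any idempotent lying in an inessential ideal of a semisimple Banach algebra must be a socle element. Under the standard characterization, $J$ is contained in the preimage of $\Rad(A/\overline{\Soc(A)})$ under the canonical projection $\pi$, so the image of the idempotent $p$ in $A/\overline{\Soc(A)}$ is simultaneously idempotent and quasinilpotent, forcing $\pi(p)=0$ and hence $p\in\overline{\Soc(A)}$. The final promotion from $p\in\overline{\Soc(A)}$ to $p\in\Soc(A)$ is expected to use the rank Scarcity Theorem (\cite[Theorem 2.4]{aupetitmoutontrace}) together with a perturbation argument: for socle approximants $q$ of $p$ with $\|q-p\|$ small enough that $\sigma(q)$ remains in a small neighborhood of $\sigma(p)\subseteq\{0,1\}$, the Riesz projection of $q$ at $1$ is an idempotent of finite rank that is close to $p$; the locally-constant behaviour of the rank of a Riesz projection at a fixed isolated spectral value gives a uniform bound on these ranks, and then closedness of $\{x:\rank(x)\le n\}$ yields $\rank(p)<\infty$, i.e.\ $p\in\Soc(A)$.
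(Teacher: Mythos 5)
Your proof is correct, but it follows a genuinely different route from the one in the paper. The paper never leaves the corner $pAp$: it notes that $pxp\in J$ for every $x$, so if $\#\sigma(pxp)=\infty$ then $\sigma'(pxp)$ would accumulate at $0$ and $pxp$ could not be invertible in $pAp$, which it is for $x$ near $\mathbf 1$; Jacobson's Lemma and the Scarcity Theorem then upgrade $\#\sigma(px)<\infty$ from a neighbourhood of $\mathbf 1$ to all of $A$, giving $\rank(p)<\infty$ straight from the Aupetit--Mouton definition of rank. You instead show $p=usp\in J$ (correct), pass to $A/\overline{\Soc(A)}$ via the identification of inessential ideals with ideals contained in the kernel of the hull of the socle, kill the quasinilpotent idempotent $\pi(p)$, and then promote $p\in\overline{\Soc(A)}$ to $p\in\Soc(A)$ by approximating with socle elements. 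Two remarks. First, your route leans on the nontrivial equivalence between the spectral definition of inessentiality (every element of $J$ has spectrum finite or converging to $0$ --- the only property the paper actually uses) and the hull-kernel one; that equivalence is standard Riesz theory in semisimple Banach algebras, but it is a real theorem, whereas the paper's argument needs only the spectral property. Second, your final perturbation step is sound but stated loosely: the clean justification is that sufficiently close idempotents are similar via an element of $G_{\mathbf 1}(A)$ (the Zem\'anek result \cite{idempotba} already used elsewhere in the paper), and rank is a similarity invariant since $\sigma'(xvav^{-1})=\sigma'(v^{-1}xva)$; hence a single socle approximant $q$ whose Riesz projection $e_q$ at $1$ is close enough to $p$ gives $\rank(p)=\rank(e_q)<\infty$ outright, replacing both the ``uniform bound'' and the appeal to closedness of $\{x:\rank(x)\le n\}$. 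What your approach buys is the stronger intermediate fact that the Riesz projection itself lies in $J$, and ultimately in $\Soc(A)$; what the paper's approach buys is self-containedness within the spectral machinery it has already set up.
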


\begin{proof}
Let $\sigma'(s)=\{\lambda_1,\lambda_2,\dots\}$ and set, for any fixed $i\in\mathbb N$, $p:=p(\lambda_i,s)$, the Riesz projection corresponding to $\lambda_i$ and $s$. Recall that $pAp$ is a semisimple Banach algebra with identity $p$. There exists an open neighborhood $V$ of $\mathbf 1$ in $A$ such that $pxp$ is invertible in $pAp$ for each $x\in V$. Now suppose $x\in V$ and $\#\sigma_A(px)=\infty$. Then, by Jacobson's Lemma, $\#\sigma_A(pxp)=\infty=\#\sigma_{A}'(pxp)$, and, since $p \in sA$, it follows from our hypothesis on $s$ that $\sigma_{A}'(pxp)$ is a sequence converging to $0$. But this means
$\sigma_{pAp}(pxp)$ contains a sequence converging to zero, from which it follows (since the spectrum is closed) that $pxp$ cannot be invertible in $pAp$ giving a contradiction.
So $\#\sigma_A(px)<\infty$ for all $x\in V$ and a standard application of the Scarcity Theorem then says $\#\sigma_A(px)<\infty$ for all $x\in A$. Thus $\mathrm{rank}\,(p)<\infty$.
\end{proof}

\section{Algebraic Extension}\label{saextend}

Throughout this section $\mathcal{A}$ will denote an algebra over a scalar field $K$. The first proposition is well-known and it shows how one can obtain a multiplication scheme for $\mathcal{A} \otimes \mathcal{A}$ by utilizing any linear functional $f$ on $\mathcal{A}$. This will turn the vector space $\mathcal{A} \otimes \mathcal{A}$ into an algebra. We will then consider the specific case where $\mathcal{A} =  \Soc(A)$ and $f = \mathrm{Tr}$, and use this to construct a Shoda Completion for $A$.

\begin{proposition}\label{s2.1}
Let $\mathcal{A}$ be an algebra over the field $K$ and let $f$ be a linear functional on $\mathcal{A}$. Then $\mathcal{A} \otimes \mathcal{A}$ is an algebra where multiplication of elementary tensors is defined by
$$\left(a \otimes b\right)\left(c \otimes d\right) = f(bc)a \otimes d \;\,\:\left(a, b, c, d \in \mathcal{A}\right).$$
\end{proposition}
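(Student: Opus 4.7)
The plan is to first establish that the prescribed rule on elementary tensors extends to a well-defined bilinear multiplication on $\mathcal{A} \otimes \mathcal{A}$, and then verify associativity. Distributivity and compatibility with scalars are automatic once bilinearity is established.

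For well-definedness I would invoke the universal property of the tensor product. Fix $(c, d) \in \mathcal{A} \times \mathcal{A}$. The map $\mathcal{A} \times \mathcal{A} \to \mathcal{A} \otimes \mathcal{A}$ defined by $(a, b) \mapsto f(bc)\,a \otimes d$ is $K$-bilinear because $f$ is linear and the tensor product is bilinear in each slot. Hence it factors uniquely through a linear map $R_{c,d} : \mathcal{A} \otimes \mathcal{A} \to \mathcal{A} \otimes \mathcal{A}$. Next, observe that the assignment $(c, d) \mapsto R_{c,d}$ is itself $K$-bilinear: for example, in the $c$ variable this follows because $f$ is linear in $bc$, and in the $d$ variable it follows from bilinearity of the tensor product. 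Applying the universal property once more, the map $(c, d) \mapsto R_{c,d}$ extends to a linear map $\mathcal{A} \otimes \mathcal{A} \to \mathrm{End}_K(\mathcal{A} \otimes \mathcal{A})$, which is the same data as a bilinear multiplication $m : (\mathcal{A} \otimes \mathcal{A}) \times (\mathcal{A} \otimes \mathcal{A}) \to \mathcal{A} \otimes \mathcal{A}$ agreeing with the prescription on elementary tensors. This is the key step, but it is entirely routine once one commits to the universal-property framework; the only thing one must be careful about is that a generic element of $\mathcal{A} \otimes \mathcal{A}$ has many representations as a sum of elementary tensors, and the universal property is precisely what guarantees that the formula respects all such representations.

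For associativity, since the multiplication is now bilinear in both variables, it suffices to check the identity on elementary tensors. For $a, b, c, d, e, g \in \mathcal{A}$,
\begin{align*}
\bigl((a \otimes b)(c \otimes d)\bigr)(e \otimes g) &= \bigl(f(bc)\,a \otimes d\bigr)(e \otimes g) = f(bc)\,f(de)\,a \otimes g, \\
(a \otimes b)\bigl((c \otimes d)(e \otimes g)\bigr) &= (a \otimes b)\bigl(f(de)\,c \otimes g\bigr) = f(de)\,f(bc)\,a \otimes g.
\end{align*}
These agree, so associativity holds on elementary tensors and therefore, by bilinearity, on all of $\mathcal{A} \otimes \mathcal{A}$. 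Distributivity over addition and compatibility with scalar multiplication follow immediately from the bilinearity of $m$.

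The only real obstacle is bookkeeping around well-definedness; one could alternatively start from the $4$-linear map $\mathcal{A}^4 \to \mathcal{A} \otimes \mathcal{A}$, $(a, b, c, d) \mapsto f(bc)\,a \otimes d$, factor it through $\mathcal{A}^{\otimes 4}$, and then identify $\mathcal{A}^{\otimes 4}$ with $(\mathcal{A} \otimes \mathcal{A}) \otimes (\mathcal{A} \otimes \mathcal{A})$ to obtain the bilinear map, but the two approaches are equivalent. No difficulty arises from the absence of a unit or from the form of $f$, since $f$ enters purely as a scalar coefficient.
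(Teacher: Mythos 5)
Your proof is correct. The paper states this proposition without proof, explicitly calling it ``well-known,'' so there is nothing to compare against; your argument --- well-definedness via the universal property applied twice (first in $(a,b)$ for fixed $(c,d)$, then in $(c,d)$ landing in $\mathrm{End}_K(\mathcal{A}\otimes\mathcal{A})$), followed by checking associativity on elementary tensors and extending by multilinearity --- is exactly the standard argument one would supply, and it handles the one genuine issue (independence of the formula from the chosen representation as a sum of elementary tensors) correctly.
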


With $\mathcal{A} =  \Soc(A)$ and $f = \mathrm{Tr}$, Proposition \ref{s2.1} now readily gives that $ \Soc(A) \otimes  \Soc(A)$ is an algebra where multiplication of elementary tensors is given by
$$\left(a \otimes b\right)\left(c \otimes d\right) = \Tr(bc) a \otimes d \;\,\:\left(a, b, c, d \in  \Soc(A)\right).$$

Let $\mathcal{P}$ be a fixed set of projections generating $\Soc(A)$. For $p, q \in \mathcal{P}$, define $J_{p, q} := Ap \otimes qA$. Moreover, let
$$ A_{J} := \left\{\sum_{i =1}^{n} x_{i}: x_{i} \in J_{p, q}\;\,\mathrm{for\;some}\:\,p, q \in \mathcal{P}\;\,\mathrm{with}\;\,p \neq q, n\in\mathbb N \right\},$$
$$ A_{J}' := \left\{\sum_{i =1}^{n} x_{i}: x_{i} \in J_{p, q}\;\,\mathrm{for\;some}\:\,p, q \in \mathcal{P}, n\in\mathbb N \right\},$$
and
$$ A_{J}'' := \left\{\sum_{i =1}^{n} x_{i}: x_{i} \in J_{p, p}\;\,\mathrm{for\;some}\:\,p \in \mathcal{P}, n\in\mathbb N\right\}.$$
It is not hard to see that $ A_{J}$ is a vector subspace but not a subalgebra of $ \Soc(A) \otimes  \Soc(A)$, since it is not closed under multiplication. However, $ A_{J}'$ is indeed a subalgebra of $ \Soc(A) \otimes  \Soc(A)$. Also observe that $A_{J}' =  A_{J}'' +  A_{J}$, and that $ A_{J}'' \cong \oplus_{p \in \mathcal{P}} Ap \otimes pA$. The following lemma will be useful later on:

\begin{lemma}\label{s2.5}
Let $u \in  A_{J}'$. Then there exist a unique element $u_{S} \in  A_{J}''$ and a unique element $u_{j} \in  A_{J}$ such that $u = u_{S}+u_{j}$.
\end{lemma}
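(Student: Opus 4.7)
The plan is to verify that $A_J' = A_J'' \oplus A_J$ as an internal direct sum of vector spaces, with existence of the decomposition being immediate and uniqueness reducing to the claim $A_J'' \cap A_J = \{0\}$. For existence, given $u = \sum_i x_i \in A_J'$ with $x_i \in J_{p_i, q_i}$, simply let $u_S$ collect the summands for which $p_i = q_i$ and $u_j$ collect the rest; then $u_S \in A_J''$ and $u_j \in A_J$ by definition.

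The main step is to upgrade Theorem~\ref{s2.4} together with the remark preceding it to an internal vector-space decomposition
$$\Soc(A) = \bigoplus_{p \in \mathcal{P}} J_p.$$
That the $J_p$ span $\Soc(A)$ is exactly Lemma~\ref{s2.2}. For directness, I would argue by contradiction: if some $J_{p_0}$ met $\sum_{p \neq p_0} J_p$ nontrivially, then by Lemma~\ref{s2.3} the minimality of $J_{p_0}$ would force $J_{p_0} \subseteq \sum_{p \neq p_0} J_p$, and hence $J_{p_0}^2 \subseteq \sum_{p \neq p_0} J_{p_0} J_p = \{0\}$ by the pairwise orthogonality in Lemma~\ref{s2.2}, contradicting $p_0 = p_0^2 \in J_{p_0}^2$.

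Since the tensor product of vector spaces distributes over direct sums, the displayed decomposition of $\Soc(A)$ immediately yields
$$\Soc(A) \otimes \Soc(A) = \bigoplus_{p, q \in \mathcal{P}} J_p \otimes J_q.$$
Because $Ap \subseteq J_p$ and $qA \subseteq J_q$, each subspace $J_{p,q} = Ap \otimes qA$ sits inside the $(p,q)$-summand $J_p \otimes J_q$. Therefore $A_J'' \subseteq \bigoplus_{p} J_p \otimes J_p$ lies entirely in the diagonal summands, while $A_J \subseteq \bigoplus_{p \neq q} J_p \otimes J_q$ lies entirely in the off-diagonal summands. These two families of summands are disjoint pieces of the direct-sum decomposition, so $A_J'' \cap A_J = \{0\}$, which delivers the uniqueness. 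The only subtle point is pinning down the internal direct-sum structure on $\Soc(A)$; once that is established, the tensor-product-of-direct-sums identity and the inclusions $Ap \subseteq J_p$, $qA \subseteq J_q$ make the rest routine bookkeeping.
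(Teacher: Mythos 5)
Your proof is correct, but it takes a genuinely different route from the paper's. The paper argues by contradiction inside the algebra $\Soc(A)\otimes\Soc(A)$: assuming two decompositions with distinct diagonal parts, it normalizes the difference so that the families $\left\{x_{i,j}s_{i}\right\}$ and $\left\{s_{i}y_{i,j}\right\}$ are linearly independent, then uses Theorem~\ref{b3.3} together with the Spectral Mapping Theorem and Jacobson's Lemma to manufacture an element $s=a_{2}s_{1}\otimes s_{1}a_{1}$ for which $s(\cdot)s$ annihilates every off-diagonal component (by orthogonality of the $J_{p}$) while extracting a single nonzero diagonal elementary tensor, contradicting $0=s\left(u_{S}-u_{S}'\right)s+s\left(u_{j}-u_{j}'\right)s$. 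You instead isolate the purely linear-algebraic content: your directness argument for $\Soc(A)=\oplus_{p}J_{p}$ is sound, since $J_{p_{0}}\cap\sum_{p\neq p_{0}}J_{p}$ is a two-sided ideal, so minimality (Lemma~\ref{s2.3}) and orthogonality (Lemma~\ref{s2.2}) together with $p_{0}=p_{0}^{2}\in J_{p_{0}}^{2}$ yield a contradiction; then distributivity of the algebraic tensor product over direct sums, plus the inclusions $Ap\subseteq J_{p}$ and $qA\subseteq J_{q}$, places $A_{J}''$ and $A_{J}$ in disjoint families of summands of $\Soc(A)\otimes\Soc(A)$, so they meet only in $0$ and uniqueness follows. (The one point worth stating explicitly is that $J_{p,q}$ is to be read as the canonical image of $Ap\otimes qA$ in $\Soc(A)\otimes\Soc(A)$, i.e.\ the span of the elementary tensors $xp\otimes qy$, which is how the paper uses it throughout.) Your route is more elementary --- it avoids Theorem~\ref{b3.3}, the spectral machinery, and the normalization of tensor representations altogether --- and it makes transparent that the lemma is a vector-space statement rather than one about the multiplication. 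What the paper's method buys is a reusable separation-by-multiplication technique: essentially the same construction is recycled in Lemma~\ref{s4.1} to distinguish the components $J_{p,q}$ for distinct ordered pairs, although your direct-sum decomposition would serve equally well there since each $J_{p,q}$ sits inside the summand $J_{p}\otimes J_{q}$.
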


\begin{proof}
Suppose that
\begin{equation}
u = \sum_{i=1}^{n} x_{i}p_{i} \otimes p_{i}y_{i} + \sum_{i=1}^{m} z_{i}r_{i} \otimes q_{i}w_{i}
\label{seq2.2}
\end{equation}
and
\begin{equation}
u = \sum_{i=1}^{n'} x_{i}'p_{i}' \otimes p_{i}'y_{i}' + \sum_{i=1}^{m'} z_{i}'r_{i}' \otimes q_{i}'w_{i}',
\label{seq2.3}
\end{equation}
where $p_{i}, q_{i}, r_{i}, p_{i}', q_{i}', r_{i}' \in \mathcal{P}$, $r_{i} \neq q_{i}$ and $r_{i}' \neq q_{i}'$ for all $i$. To prove uniqueness it will suffice to show that
\begin{equation}
\sum_{i=1}^{n} x_{i}p_{i} \otimes p_{i}y_{i} = \sum_{i=1}^{n'} x_{i}'p_{i}' \otimes p_{i}'y_{i}'.
\label{seq2.4}
\end{equation}
For the sake of a contradiction, suppose (\ref{seq2.4}) is false. Let $u_{S} = \sum_{i=1}^{n} x_{i}p_{i} \otimes p_{i}y_{i}$, $u_{j} =  \sum_{i=1}^{m} z_{i}r_{i} \otimes q_{i}w_{i}$, $u_{S}' = \sum_{i=1}^{n'} x_{i}'p_{i}' \otimes p_{i}'y_{i}'$ and $u_{j}' = \sum_{i=1}^{m'} z_{i}'r_{i}' \otimes q_{i}'w_{i}'$. From (\ref{seq2.2}) and (\ref{seq2.3}) it follows that
\begin{equation}
0 = \left(u_{S} - u_{S}'\right) + \left(u_{j} - u_{j}'\right).
\label{seq2.5}
\end{equation}
Without loss of generality we may assume that
\begin{equation}
u_{S} - u_{S}' = \left(\sum_{i=1}^{l_{1}} x_{1, i}s_{1} \otimes s_{1}y_{1, i}\right) + \cdots + \left(\sum_{i=1}^{l_{k}} x_{k, i}s_{k} \otimes s_{k}y_{k, i}\right),
\label{seq2.6}
\end{equation}
where $s_{1}, \ldots, s_{k} \in \mathcal{P}$ and $s_{i} \neq s_{j}$ for $i \neq j$. Moreover, by \cite[Lemma 42.3]{bonsall1973complete} we may assume that $\left\{x_{i, j}s_{i} \right\}$ and $\left\{s_{i}y_{i, j}\right\}$ are linearly independent sets for all $i \in \left\{1, \ldots, k \right\}$. Since (\ref{seq2.4}) is false, one of the terms in (\ref{seq2.6}) must be nonzero, say
\begin{equation}
\sum_{i=1}^{l_{1}} x_{1, i}s_{1} \otimes s_{1}y_{1, i} \neq 0.
\label{seq2.7}
\end{equation}
Similarly, one of the terms in (\ref{seq2.7}) must be nonzero, say $x_{1, 1}s_{1} \otimes s_{1}y_{1, 1} \neq 0$. In particular, this means that $x_{1, 1}s_{1} \neq 0$, $s_{1}y_{1, 1} \neq 0$ and that $s_{1}$ is a rank one projection of $A$. By Theorem~\ref{b3.3}, the Spectral Mapping Theorem and Jacobson's Lemma (\cite[Lemma 3.1.2]{aupetit1991primer}), there exist $a_{1}, a_{2} \in A$ such that $s_{1}a_{1}x_{1, 1}s_{1} = s_{1}$, $s_{1}y_{1, 1}a_{2}s_{1} = s_{1}$, $s_{1}a_{1}x_{1, j}s_{1} = 0$ and $s_{1}y_{1, j}a_{2}s_{1} = 0$ for all $j \in \left\{2, \ldots, l_{1} \right\}$. Let $s = a_{2}s_{1} \otimes s_{1}a_{1}$. In particular, $s \neq 0$. Moreover, from (\ref{seq2.5}) it now follows that
\begin{equation}
0 = s\left(u_{S} - u_{S}'\right)s + s\left(u_{j} - u_{j}'\right)s.
\label{seq2.8}
\end{equation}
Thus, by the orthogonality of the $J_{p}$, the definition of multiplication in $ \Soc(A) \otimes  \Soc(A)$ and our choice of $s$, (\ref{seq2.8}) reduces to $0 = a_{2}s_{1} \otimes s_{1}a_{1}$. But this is absurd since $s \neq 0$. Hence, (\ref{seq2.4}) holds which completes the proof. 
\end{proof}

Let $u, v, w \in  A_{J}$ and let $\alpha \in \mathbb{C}$. In particular, since any product of elements from $ A_{J}$ belongs to $ A_{J}'$, Lemma \ref{s2.5} implies the following:
$$\alpha \left[uv\right]_{S} = \left[\left(\alpha u\right)v\right]_{S} = \left[u\left(\alpha v\right)\right]_{S},$$
$$\left[u\left(v+w\right)\right]_{S} = \left[uv\right]_{S} + \left[uw\right]_{S},\;\:\, \left[\left(v+w\right)u\right]_{S} = \left[vu\right]_{S} + \left[wu\right]_{S},$$
and
$$\left[u(vw)\right]_{S} = \left[(uv)w\right]_{S}.$$
A similar observation is true if $J$ is used as a subscript instead of $S$ above.\\

Let $B:= A \oplus  A_{J}$. If we define addition and scalar multiplication in $B$ pointwise, then surely $B$ is a vector space over $\mathbb{C}$. However, for $B$ to be an algebra, it is crucial that multiplication in $B$ is not pointwise. In fact, general multiplication in $B$ should be definable in a natural way provided we also know how to multiply elements of $A$ with elements of $ A_{J}$. This goes back to multiplication of $a \in A$ with elementary tensors $xp \otimes qy \in  A_{J}$; we readily think of
$$a\left(xp \otimes qy\right):=axp \otimes qy$$
and
$$\left(xp \otimes qy\right)a:=xp \otimes qya.$$
We can make the following observation in this regard:

\begin{lemma}\label{s2.6}
Let $\phi:  A_{J}'' \rightarrow  \Soc(A)$ be the algebra isomorphism obtained from Theorem \textnormal{\ref{s2.4}} and the remark preceding Lemma \textnormal{\ref{s2.5}}, and let $u \in  A_{J}''$. Then
$$u\left(xp \otimes qy\right) = \phi (u)\left(xp \otimes qy\right)$$
and
$$\left(xp \otimes qy\right)u = \left(xp \otimes qy\right)\phi (u)$$
for all $x, y \in A$ and $p, q \in \mathcal{P}$.
\end{lemma}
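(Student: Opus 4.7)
The plan is to reduce to single elementary tensors by bilinearity of all the operations involved, and then to verify the two identities by a direct computation resting on two facts: the pairwise orthogonality of the ideals $\{J_p : p \in \mathcal{P}\}$ from Lemma \ref{s2.2}, and the identification $pAp = \mathbb{C}p$ for a rank-one projection $p$, with the explicit formula $pzp = \Tr(pzp)\,p$ for every $z \in A$ (which follows from $\Tr(p)=1$).

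By linearity in $u$, it suffices to verify the identities for $u = x'p' \otimes p'y'$ with a single $p' \in \mathcal{P}$, since everything in sight is additive. For the first identity, the multiplication rule on $\Soc(A)\otimes\Soc(A)$ supplied by Proposition \ref{s2.1} gives
\[u(xp \otimes qy) \;=\; \Tr(p'y'\cdot xp)\,(x'p' \otimes qy),\]
while the left $A$-action on elementary tensors produces
\[\phi(u)(xp \otimes qy) \;=\; (x'p'y')(xp \otimes qy) \;=\; x'p'y'xp \otimes qy.\]
Now split into cases. If $p'\neq p$, then $p'y' \in p'A \subseteq J_{p'}$ and $xp \in Ap \subseteq J_p$, so Lemma \ref{s2.2} forces $p'y'xp = 0$, and the cyclic property of the trace combined with $xp\cdot p'y'\in J_pJ_{p'}=\{0\}$ gives $\Tr(p'y'xp)=0$; both expressions vanish. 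If $p'=p$, then $py'xp\in pAp=\mathbb{C}p$, hence $py'xp=\Tr(py'xp)\,p$, and consequently $x'p'y'xp = \Tr(p'y'xp)\,x'p'$, making the two expressions coincide.

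The second identity is proved by a parallel argument: on the tensor side,
\[(xp\otimes qy)u \;=\; \Tr(qy\cdot x'p')\,(xp\otimes p'y'),\]
while on the module side,
\[(xp\otimes qy)\phi(u) \;=\; xp\otimes qyx'p'y'.\]
A case split $q\neq p'$ versus $q=p'$, together with $qyx'p' = \Tr(qyx'p')\,p'$ in the latter case, yields the required equality. The principal obstacle is notational rather than conceptual: one must consistently distinguish multiplication in $\Soc(A)\otimes\Soc(A)$, which consumes a factor of $\Tr$, from the $A$-module action on elementary tensors, which does not, and check that the scalar produced by the trace on the tensor-algebra side matches the one extracted via the one-dimensionality of $pAp$ on the module side.
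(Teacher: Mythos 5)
Your proof is correct and follows essentially the same route as the paper's: reduce to a single elementary tensor $u = x'p'\otimes p'y'$ by bilinearity, then split into the case $p'\neq p$ (resp.\ $q\neq p'$), where orthogonality of the ideals $J_p$ kills both sides, and the case $p'=p$ (resp.\ $q=p'$), where the identity $pzp=\Tr(pzp)\,p$ from minimality makes the two scalars agree. The only cosmetic difference is that the paper also notes the trivial case $r=0$ explicitly; your remark about the cyclic property of the trace is redundant since $p'y'xp=0$ already forces $\Tr(p'y'xp)=0$.
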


\begin{proof}
By the distributivity of both multiplication schemes, we may assume, without loss of generality, that $u = z_{1}r \otimes rz_{2}$ for some $r \in \mathcal{P}$. If $r =0$, then equality trivially holds true. So assume that $r \neq 0$. Recall that $\phi \left(z_{1}r \otimes rz_{2}\right) = z_{1}rz_{2}$. Now, if $r \neq p$, then by the orthogonality of $J_{p}$ and $J_{r}$ it follows that $rz_{2}xp = 0$ and hence $\Tr\left(rz_{2}xp\right) = 0$. Thus,
$$u\left(xp \otimes qy\right) = \phi (u)\left(xp \otimes qy\right) = 0.$$
Similarly,
$$\left(xp \otimes qy\right)u = \left(xp \otimes qy\right)\phi (u)=0$$ 
whenever $r \neq q$. We may therefore assume that $r=p$ and $r=q$. By the minimality of $r$ and the definition of the trace we have $rz_{2}xr = \Tr\left(rz_{2}xr\right)r$ and $ryz_{1}r = \Tr\left(ryz_{1}r\right)r$. Consequently,
$$u\left(xr \otimes qy\right)  = \left(z_{1}r \otimes rz_{2}\right)\left(xr \otimes qy\right)  =  \Tr\left(rz_{2}xr\right)z_{1}r \otimes qy$$
and
\begin{eqnarray*}
\phi (u)\left(xr \otimes qy\right) & = & z_{1}rz_{2}\left(xr \otimes qy\right)  =  z_{1}rz_{2}xr \otimes qy \\
& = & \Tr\left(rz_{2}xr\right)z_{1}r \otimes qy,
\end{eqnarray*}
and so, $u\left(xr \otimes qy\right) = \phi (u)\left(xr \otimes qy\right)$. Similarly, $\left(xp \otimes ry\right)u = \left(xp \otimes ry\right)\phi (u)$, so the lemma is proved.
\end{proof}

For $a \in A$ and $u = \sum_{i=1}^{n} x_{i}p_{i} \otimes q_{i}y_{i} \in  A_{J}$ we now define
$$au := \sum_{i=1}^{n} ax_{i}p_{i} \otimes q_{i}y_{i}$$
and
$$ua := \sum_{i=1}^{n} x_{i}p_{i} \otimes q_{i}y_{i}a.$$
Moreover, if $u = \sum_{i=1}^{n} x_{i}p_{i} \otimes q_{i}y_{i} \in  A_{J}$ and $u = u' = \sum_{i=1}^{n'} x_{i}'p_{i}' \otimes q_{i}'y_{i}' \in  A_{J}$, then
$$0 = a\left(u - u'\right) = \sum_{i=1}^{n} ax_{i}p_{i} \otimes q_{i}y_{i} - \sum_{i=1}^{n'} ax_{i}'p_{i}' \otimes q_{i}'y_{i}' = au-au',$$
and, similarly, $0 = ua - u'a$. This shows that this operation is well-defined. This leads us to the following operation for multiplication in $B$:
\begin{equation}
(x, u)(y, v) = \left(xy + \phi \left(\left[uv\right]_{S}\right), uy+xv+\left[uv\right]_{j}\right),
\label{seq2.1}
\end{equation}
where $x, y \in A$, $u, v \in  A_{J}$ and $\phi:  A_{J}'' \rightarrow  \Soc(A)$ is the isomorphism from Lemma \ref{s2.6} above. By Lemma \ref{s2.5}, and the observation above, the operation in (\ref{seq2.1}) is well-defined. Moreover, by Lemma \ref{s2.5} and the remark following it, associativity, distributivity and associativity with scalars are all satisfied by the operation in (\ref{seq2.1}). So $B$ with the multiplication described in (\ref{seq2.1}) is an algebra over $\mathbb{C}$. If $\mathbf{1} \in A$ is the identity element of $A$, then $\left(\mathbf{1}, 0\right)$ is the identity element of $B$. An important subalgebra of $B$ is described in the next proposition:

\begin{proposition}\label{s2.7}
$ \Soc(A) \oplus  A_{J}$ is a subalgebra of $B$. Moreover, $ A_{J}' \cong   \Soc(A) \oplus  A_{J}$.
\end{proposition}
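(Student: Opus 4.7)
The plan is to verify that $\Soc(A) \oplus A_J$ is closed under the operations of $B$, and then to exhibit the isomorphism with $A_J'$ via the canonical decomposition from Lemma~\ref{s2.5}. For the subalgebra assertion, only closure under the multiplication (\ref{seq2.1}) requires work, since the vector-space operations are pointwise. Taking $(x,u), (y,v) \in \Soc(A) \oplus A_J$, the first coordinate $xy + \phi([uv]_S)$ of the product lies in $\Soc(A)$ because $\Soc(A)$ is a two-sided ideal of $A$ and $\phi$ takes values in $\Soc(A)$. The second coordinate $uy + xv + [uv]_j$ lies in $A_J$ because the $A$-action on $A_J$ defined just before Lemma~\ref{s2.6} preserves the pair of projections appearing in each elementary tensor, and $[uv]_j \in A_J$ by definition.

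For the isomorphism I would define $\psi: A_J' \to \Soc(A) \oplus A_J$ by $\psi(w) := (\phi(w_S), w_j)$, where $w = w_S + w_j$ is the unique decomposition supplied by Lemma~\ref{s2.5}. Well-definedness and linearity follow from that uniqueness, and bijectivity is immediate since the inverse sends $(a,v)$ to $\phi^{-1}(a) + v \in A_J'' + A_J = A_J'$. The essential calculation is multiplicativity. Writing $u = u_S + u_j$ and $v = v_S + v_j$ in $A_J'$, the product expands as
\[ uv = u_S v_S + u_S v_j + u_j v_S + u_j v_j, \]
in which $u_S v_S \in A_J''$ by orthogonality of the $J_p$; by Lemma~\ref{s2.6} (applied term-by-term on the $A_J''$ factor) the mixed products satisfy $u_S v_j = \phi(u_S) v_j \in A_J$ and $u_j v_S = u_j \phi(v_S) \in A_J$; and $u_j v_j \in A_J'$ splits further as $[u_j v_j]_S + [u_j v_j]_j$. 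Consequently
\[ [uv]_S = u_S v_S + [u_j v_j]_S, \qquad [uv]_j = u_S v_j + u_j v_S + [u_j v_j]_j. \]

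Applying $\psi$ and using that $\phi$ is an algebra isomorphism on $A_J''$, together with Lemma~\ref{s2.6} in reverse to rewrite $\phi(u_S) v_j = u_S v_j$ and $u_j \phi(v_S) = u_j v_S$, will match $\psi(uv)$ against the product $(\phi(u_S), u_j)(\phi(v_S), v_j)$ computed via (\ref{seq2.1}) coordinate-by-coordinate. The main obstacle I anticipate is precisely this bookkeeping of the four cross-terms and the need to swap between two different multiplications --- the tensor multiplication inside $\Soc(A) \otimes \Soc(A)$ on one side, and the $A$-action on $A_J$ combined with the product in $B$ on the other --- but Lemmas~\ref{s2.5} and~\ref{s2.6} reduce this swap to a routine check.
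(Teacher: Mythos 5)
Your proposal is correct and follows essentially the same route as the paper: the same map $\psi(w)=(\phi(w_S),w_j)$, the same four-term expansion of $uv$ with Lemma~\ref{s2.6} handling the cross-terms, and the same identification $[uv]_S=u_Sv_S+[u_jv_j]_S$, $[uv]_j=u_Sv_j+u_jv_S+[u_jv_j]_j$. The only (harmless) difference is that you spell out the closure argument for the subalgebra claim, which the paper dismisses as routine.
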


\begin{proof}
It is routine to check that $  \Soc(A) \oplus  A_{J}$ is closed under addition, scalar multiplication and multiplication. Define $\psi :  A_{J}' \rightarrow   \Soc(A) \oplus  A_{J}$ by
$$\psi (u) = \left(\phi\left(u_{S}\right), u_{j}\right) \;\,\:\left(u \in  A_{J}'\right),$$
where $\phi:  A_{J}'' \rightarrow  \Soc(A)$ is the algebra isomorphism from Lemma \ref{s2.6} and $u = u_{S}+u_{j}$ as defined in Lemma \ref{s2.5}. We claim that $\psi$ is an algebra isomorphism: Let $u, v \in  A_{J}'$ and let $\alpha \in \mathbb{C}$. Then,
\begin{eqnarray*}
\psi \left(\alpha u\right) & = & \left(\phi\left([\alpha u]_{S}\right), \left[\alpha u\right]_{j}\right)  =  \left(\alpha \phi\left(u_{S}\right), \alpha u_{j}\right) \\
& = & \alpha \left(\phi \left(u_{S}\right), u_{j}\right) = \alpha \psi (u),
\end{eqnarray*}
\begin{eqnarray*}
\psi \left(u + v\right) & = & \left(\phi\left([u+v]_{S}\right), \left[u + v\right]_{j}\right)  =  \left(\phi\left(u_{S}+ v_{S}\right), u_{j} + v_{j}\right) \\
& = & \left(\phi \left(u_{S}\right), u_{j}\right) + \left(\phi \left(v_{S}\right), v_{j}\right) = \psi (u)+ \psi (v),
\end{eqnarray*}
and
\begin{eqnarray*}
\psi (u) \psi (v) & = & \left(\phi\left(u_{S}\right), u_{j}\right)\left(\phi\left(v_{S}\right), v_{j}\right) \\
& = & \left(\phi\left(u_{S}\right)\phi \left(v_{S}\right) + \phi \left(\left[u_{j}v_{j}\right]_{S}\right),\phi\left(u_{S}\right)v_{j} +u_{j}\phi \left(v_{S}\right) + \left[u_{j}v_{j}\right]_{j}\right) \\
& = & \left(\phi\left(u_{S}v_{S} + \left[u_{j}v_{j}\right]_{S}\right),u_{S}v_{j} +u_{j}v_{S} + \left[u_{j}v_{j}\right]_{j}\right) \\
& = & \left(\phi\left([uv]_{S}\right), [uv]_{j}\right) = \psi (uv),
\end{eqnarray*}
where we have used Lemma \ref{s2.6} and the fact that
$$uv = \left(u_{S}+u_{j}\right)\left(v_{S}+v_{j}\right) = u_{S}v_{S} + u_{j}v_{S} + u_{S}v_{j} + u_{j}v_{j},$$
and so, since $u_{j}v_{S}$ and $u_{S}v_{j}$ must be in $ A_{J}$, it follows that $[uv]_{S} = u_{S}v_{S} + \left[u_{j}v_{j}\right]_{S}$ and $[uv]_{j} = u_{j}v_{S} + u_{S}v_{j} + \left[u_{j}v_{j}\right]_{j}$. This shows that $\psi$ is an algebra homomorphism. To complete the proof we must therefore show that $\psi$ is bijective: Let $(x, u)$ be any element in $ \Soc(A) \oplus  A_{J}$. Then $\psi \left(\phi^{-1}\left(x\right) + u\right) = \left(x, u\right)$ by Lemma \ref{s2.5}. This shows that $\psi$ is surjective. To see that $\psi$ is injective, we assume that $\psi (v) = 0$ and prove that $v =0$. Now, $\psi (v) = 0$ implies that $\left( \phi\left(v_{S}\right), v_{j}\right) = 0$. Hence, $\phi\left(v_{S}\right) = 0$ and $v_{j} = 0$. But $\phi$ is an algebra isomorphism, so $v_{S} = 0$. Hence, $v = v_{S} + v_{j} = 0$. This completes the proof.
\end{proof}  

\section{Norm Extension}\label{snextend}

With the basic algebraic structure of the Shoda-completion defined for some fixed projection representative class $\mathcal{P}$ generating $\Soc(A)$, we are now in a position to extend the algebra norm on $A$ to an algebra norm on $B=A\oplus A_J$. Firstly, however, we need the following lemma:

\begin{lemma}\label{s4.1}
	Let $0 \neq u\in A_J$. Then $u$ can be uniquely expressed as $u=\sum_{i=1}^{n} u_{p_i,q_i}$,
	where $0\not=u_{p_i,q_i}\in J_{p_i,q_i}$, $p_i,q_i\in\mathcal P$ and $p_i \neq q_i$ for each $i \in \left\{1, \ldots, n \right\}$, and where $\left(p_{i}, q_{i}\right) \neq \left(p_{j}, q_{j}\right)$ for $i \neq j$ (as ordered sets).
\end{lemma}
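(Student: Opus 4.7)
\emph{Existence} is immediate from the definition of $A_J$: any $u \in A_J$ is by hypothesis a finite sum of elements of $J_{p, q}$'s (with $p \neq q$), so gathering summands that share the same ordered pair $(p, q)$ and discarding zero summands produces the stated form. The content of the lemma is \emph{uniqueness}, which I would reduce to the following claim: if
$$\sum_{i=1}^{n} u_{p_i, q_i} = 0, \quad u_{p_i, q_i} \in J_{p_i, q_i}, \quad p_i \neq q_i, \quad (p_i, q_i) \text{ pairwise distinct},$$
then every $u_{p_i, q_i}$ vanishes. (Given two canonical decompositions of the same $u$, one subtracts them, regroups by ordered pair, and appeals to this claim.)

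To prove the claim, I would argue by contradiction: assume $u_{p_1, q_1} \neq 0$ and write $u_{p_1, q_1} = \sum_{j=1}^{k} x_j p_1 \otimes q_1 y_j$. By \cite[Lemma 42.3]{bonsall1973complete}, the sets $\{x_j p_1\}$ and $\{q_1 y_j\}$ may be assumed linearly independent, and thus consist of rank one elements. Two applications of Theorem \ref{b3.3}, combined with Jacobson's Lemma and the identity $\sigma(a) = \{0\} \iff \Tr(a) = 0$ for rank-at-most-one $a$, produce elements $b, e \in A$ such that
$$\Tr(b x_1 p_1) \neq 0, \quad \Tr(b x_j p_1) = 0 \quad (j \geq 2),$$
$$\Tr(q_1 y_1 e) \neq 0, \quad \Tr(q_1 y_j e) = 0 \quad (j \geq 2).$$
The contradiction is then extracted by sandwiching the zero sum between the elementary tensors $s_L := a \otimes p_1 b$ and $s_R := e q_1 \otimes g$, for arbitrary nonzero $a, g \in \Soc(A)$, both of which live in $\Soc(A) \otimes \Soc(A)$.

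\emph{The main obstacle} is verifying that this sandwich isolates the $(p_1, q_1)$-component, i.e., that $s_L u_{p_i, q_i} s_R = 0$ for every $(p_i, q_i) \neq (p_1, q_1)$. This hinges on the key observation that distinct projections of $\mathcal{P}$ are orthogonal as elements of $A$: for $p \neq q$ in $\mathcal{P}$ one has $pq \in J_p J_q = \{0\}$ by Lemma \ref{s2.2}. Using the multiplication rule $(\alpha \otimes \beta)(\gamma \otimes \delta) = \Tr(\beta \gamma) \alpha \otimes \delta$ together with cyclicity of the trace, either $p_i \neq p_1$ — in which case $\Tr(p_1 b \cdot c\, p_i) = \Tr(b c\, p_i p_1) = 0$ kills $s_L u_{p_i, q_i}$ — or else $p_i = p_1$ and $q_i \neq q_1$, in which case $\Tr(q_i d \cdot e q_1) = \Tr(d\, e\, q_1 q_i) = 0$ kills $u_{p_i, q_i} s_R$. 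Hence
$$0 = s_L \Bigl(\sum_{i=1}^{n} u_{p_i, q_i}\Bigr) s_R = s_L\, u_{p_1, q_1}\, s_R = \Tr(b x_1 p_1)\Tr(q_1 y_1 e)\,(a \otimes g) \neq 0,$$
the desired contradiction. Structurally this mirrors the isolation device used in the proof of Lemma \ref{s2.5}, only adapted to the ordered-pair indexing of the ideals $J_{p,q}$.
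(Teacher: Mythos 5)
Your proposal is correct and follows essentially the same route as the paper: both isolate a single $(p,q)$-component of the vanishing sum by sandwiching with elementary tensors built from Theorem \ref{b3.3} (via the rank-one fact that $\sigma(a)\neq\{0\}$ iff $\Tr(a)\neq 0$) and then kill all other components using the orthogonality $J_pJ_q=\{0\}$ from Lemma \ref{s2.2}. Your reformulation of uniqueness as ``a sum over distinct ordered pairs equal to zero has all terms zero'' is just a clean repackaging of the paper's two-step argument (first matching the index pairs, then the components), and the sandwich elements $a\otimes p_1b$, $eq_1\otimes g$ play exactly the role of the paper's $p_i\otimes p_ix$ and $yq_i\otimes q_i$.
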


\begin{proof}
Suppose that $u = \sum_{i=1}^{n} u_{p_{i}, q_{i}}$ as described above and that $u = \sum_{i=1}^{m} v_{r_{i}, s_{i}}$, where $0\not=v_{r_i,s_i}\in J_{r_i,s_i}$, $r_i,s_i\in\mathcal P$ and $r_i \neq s_i$ for each $i \in \left\{1, \ldots, m \right\}$, and where $\left(r_{i}, s_{i}\right) \neq \left(r_{j}, s_{j}\right)$ for $i \neq j$. Then,
\begin{equation}
0 = \sum_{i=1}^{n} u_{p_{i}, q_{i}} - \sum_{i=1}^{m} v_{r_{i}, s_{i}}.
\label{seq4.1}
\end{equation}
For the sake of a contradiction, suppose that there exists an $i \in \left\{1, \ldots, n \right\}$ such that $\left(p_{i}, q_{i}\right) \neq \left(r_{j}, s_{j}\right)$ for all $j \in \left\{1, \ldots, m \right\}$. Recall that $u_{p_{i}, q_{i}} \neq 0$ can be written as
$$u_{p_{i}, q_{i}} = \sum_{j=1}^{k} x_{j}p_{i} \otimes q_{i}y_{j},$$
where $\left\{x_{j}p_{i} \right\}$ and $\left\{q_{i}y_{j} \right\}$ are linearly independent sets. By Theorem~\ref{b3.3}, the Spectral Mapping Theorem, Jacobson's Lemma and the definition of multiplication in $\Soc(A) \otimes \Soc(A)$, there exist $x, y \in A$ such that
$$\left(p_{i} \otimes p_{i}x\right)u_{p_{i}, q_{i}}\left(yq_{i} \otimes q_{i}\right) = p_{i} \otimes q_{i} \neq 0.$$
However, from (\ref{seq4.1}) it then follows that
\begin{eqnarray*}
0 & = & \left(p_{i} \otimes p_{i}x\right)\left(\sum_{i=1}^{n} u_{p_{i}, q_{i}} - \sum_{i=1}^{m} v_{r_{i}, s_{i}}\right)\left(yq_{i} \otimes q_{i}\right) \\
& = & p_{i} \otimes q_{i},
\end{eqnarray*}
producing a contradiction. Hence, for every $i \in \left\{1, \ldots, n \right\}$ there exists a $j \in \left\{1, \ldots, m \right\}$ such that $\left(p_{i}, q_{i}\right) = \left(r_{j}, s_{j}\right)$. Similarly, it can be shown that for every $j \in \left\{1, \ldots, m \right\}$ there exists an $i \in \left\{1, \ldots, n \right\}$ such that $\left(r_{j}, s_{j}\right) = \left(p_{i}, q_{i}\right)$. Hence, (\ref{seq4.1}) can be written as
$$ 0 = \sum_{i=1}^{n} \left(u_{p_{i}, q_{i}} - v_{p_{i}, q_{i}}\right).$$
So the lemma is true if $u_{p_{i}, q_{i}} = v_{p_{i}, q_{i}}$ for each $i \in \left\{1, \ldots, n \right\}$. But if $u_{p_{i}, q_{i}} \neq v_{p_{i}, q_{i}}$ for some $i \in \left\{1, \ldots, n \right\}$, then we can obtain a contradiction using the same argument as before. We therefore have the result. 
\end{proof}

Lemma~\ref{s4.1} now allows us to write
\begin{equation*}
A_J=\bigoplus_{\substack{p,q\in\mathcal P \\
		p\not=q}}
 Ap\otimes qA.
\end{equation*}
This point of view, together with Lemma \ref{s2.5}, is crucial for the norm to be well-defined.\\

For $p\not=q$ denote by $Ap\otimes_{\pi}qA$ the algebraic tensor product endowed with the projective tensor norm, and by $Ap\,\hat\otimes_{\pi}\,qA$ its norm completion. The norm on each $Ap\,\hat\otimes_{\pi}\,qA$ will be denoted by $\|\cdot\|_{\pi,p,q}$. We denote then the algebraic direct sum of the normed algebras  $Ap\otimes_{\pi}qA$ under the $l_1$ norm by
\begin{equation*}
A_{J,\pi}=\bigoplus_{\substack{p,q\in\mathcal P \\
		p\not=q}}
Ap\otimes_\pi qA,
\end{equation*}
 and then by $\bar A_{J,\pi}$ the norm completion of $A_{J,\pi}$. It is worthwhile to notice that
 $$\bigoplus_{\substack{p,q\in\mathcal P \\ p\not=q}} Ap\,\hat\otimes_\pi\, qA\subseteq \bar A_{J,\pi},$$ but that the containment may be strict.
  With the norm on $A$ denoted by $\|\cdot\|_A$ and the $l_1$ norm on $\bar A_{J,\pi}$ by $\|\cdot\|_1$,
we have:
\begin{lemma}\label{snorm}
	$A\oplus A_{J,\pi}  $ is a normed algebra under 
	$$\|(x,u)\|=\|x\|_A+\|u\|_1,$$ where $x\in A$ and $u\in A_{J,\pi}$. 
\end{lemma}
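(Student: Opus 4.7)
The plan is to verify the norm axioms (which are essentially inherited from $\|\cdot\|_A$ and $\|\cdot\|_1$) and then concentrate on submultiplicativity, which is the substantive content. Expanding the product using the rule from (\ref{seq2.1}), submultiplicativity amounts to proving
\[
\|xy+\phi([uv]_S)\|_A+\|uy+xv+[uv]_j\|_1\leq(\|x\|_A+\|u\|_1)(\|y\|_A+\|v\|_1).
\]
After applying the triangle inequality on the left, this breaks into four estimates: $\|xy\|_A\leq\|x\|_A\|y\|_A$ (built into $A$), two mixed estimates $\|uy\|_1\leq\|u\|_1\|y\|_A$ and $\|xv\|_1\leq\|x\|_A\|v\|_1$, and the genuinely delicate bound $\|\phi([uv]_S)\|_A+\|[uv]_j\|_1\leq\|u\|_1\|v\|_1$.

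For the mixed estimates, I would fix a summand $u_{p,q}=\sum_i x_i p\otimes qy_i$ of $u$, note that $u_{p,q}y=\sum_i x_ip\otimes qy_iy$, and use the definition of the projective tensor norm together with $\|qy_iy\|_A\leq\|qy_i\|_A\|y\|_A$ to conclude $\|u_{p,q}y\|_{\pi,p,q}\leq\|u_{p,q}\|_{\pi,p,q}\|y\|_A$. Summing the $\ell_1$ direct sum over $(p,q)$ with $p\neq q$ gives $\|uy\|_1\leq\|u\|_1\|y\|_A$, and the estimate for $xv$ is symmetric.

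The main obstacle is the estimate involving $[uv]_S$ and $[uv]_j$. Writing $u=\sum_{p\neq q}u_{p,q}$ and $v=\sum_{q'\neq s}v_{q',s}$, the multiplication rule $(a\otimes b)(c\otimes d)=\Tr(bc)\,a\otimes d$ on $\Soc(A)\otimes\Soc(A)$, together with the orthogonality of the ideals $J_q,J_{q'}$ for $q\neq q'$ (Lemma~\ref{s2.2}), forces all cross terms with $q\neq q'$ to vanish. Hence $uv=\sum_{p\neq q,\,q\neq s}u_{p,q}v_{q,s}$, and this sum naturally splits: the terms with $p=s$ live in $Ap\otimes pA$ and contribute to $[uv]_S$, while those with $p\neq s$ contribute to $[uv]_j$. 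For a single product $u_{p,q}v_{q,s}$ with $u_{p,q}=\sum_i x_ip\otimes qy_i$ and $v_{q,s}=\sum_j z_jq\otimes sw_j$, the identity $qy_iz_jq=\Tr(qy_iz_jq)\,q$ (valid since $q$ is rank one, so $qAq=\mathbb{C}q$) together with the norm estimate $|\Tr(qy_iz_jq)|\|q\|_A=\|qy_iz_jq\|_A\leq\|qy_i\|_A\|z_jq\|_A$ (and $\|q\|_A\geq 1$ because $q$ is a nontrivial idempotent) gives the key trace bound $|\Tr(qy_iz_jq)|\leq\|qy_i\|_A\|z_jq\|_A$.

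Using this trace bound, I would treat the two cases separately. When $p=s$, $\phi(u_{p,q}v_{q,p})=\sum_{i,j}\Tr(qy_iz_jq)x_ipw_j$, so $\|\phi(u_{p,q}v_{q,p})\|_A\leq\sum_{i,j}|\Tr(qy_iz_jq)|\,\|x_ip\|_A\|pw_j\|_A$, and taking infima over representations yields $\|\phi(u_{p,q}v_{q,p})\|_A\leq\|u_{p,q}\|_{\pi,p,q}\|v_{q,p}\|_{\pi,q,p}$. When $p\neq s$, an identical computation gives $\|u_{p,q}v_{q,s}\|_{\pi,p,s}\leq\|u_{p,q}\|_{\pi,p,q}\|v_{q,s}\|_{\pi,q,s}$. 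Combining these through the triangle inequality and Lemma~\ref{s4.1} (which ensures the decomposition into components is unambiguous),
\[
\|\phi([uv]_S)\|_A+\|[uv]_j\|_1\leq\sum_{\substack{p,q,s\\p\neq q,\,q\neq s}}\|u_{p,q}\|_{\pi,p,q}\|v_{q,s}\|_{\pi,q,s}\leq\|u\|_1\|v\|_1,
\]
where the last step is a routine interchange of summation, bounding the triple sum by the product of the two $\ell_1$ sums. The four bounds assemble into the required submultiplicativity. Well-definedness of the multiplication on $A\oplus A_{J,\pi}$ (finite support is preserved) is immediate, and the identity element $(\mathbf{1},0)$ lies in the space, completing the verification.
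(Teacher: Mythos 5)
Your proposal is correct and follows essentially the same route as the paper: reduce submultiplicativity to the two mixed estimates plus the bound on the $\Soc(A)$- and $A_J$-parts of $uv$, handle each component via the projective tensor norm, and control the scalar $\Tr(qy_iz_jq)$ by $\|qy_i\|_A\|z_jq\|_A$ (the paper gets this via $|\Tr|=\rho\le\|\cdot\|$ on rank one elements, you via $qAq=\mathbb{C}q$ and $\|q\|_A\ge 1$ — an equivalent step).
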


\begin{proof}
It has been established that  $A\oplus A_{J,\pi}$ is a complex unital algebra. Since vector addition is pointwise, $\|\cdot\|$ is a vector norm. But vector multiplication is not pointwise so we need to verify that $\|\cdot\|$ is submultiplicative: Let $a,b\in A$ and $u,v\in A_{J,\pi}$.  We show that
$$\|(a,u)(b,v)\|\leq\|(a,u)\|\|(b,v)\|.$$
We can write 
$$u=\sum_{i=1}^nu_{p_i,q_i}\mbox{ and }v=\sum_{i=1}^mv_{\tilde p_i,\tilde q_i}$$ where, for each $i$,
$$u_{p_i,q_i}\in Ap_i\otimes q_iA\mbox{ and }v_{\tilde p_i,\tilde q_i}\in A\tilde p_i\otimes \tilde q_iA.$$
As was shown earlier, the product $(a,u)(b,v)$ takes the form 
$$(a,u)(b,v)=(ab+\Omega(uv),ub+av+\Gamma(uv))$$ where $\Omega(uv)$ is a sum of elements belonging to $A$, and 
$\Gamma(uv)$ is a sum of elements belonging to $A_{J,\pi}$.
We have 
\begin{align*}
\|(a,u)(b,v)\|&=\|ab+\Omega(uv)\|_A+\|ub+av+\Gamma(uv)\|_1\\
&\leq\|ab\|_A+\|\Omega(uv)\|_A+\|ub\|_1+\|av\|_1+\|\Gamma(uv)\|_1\\
&\leq\|a\|_A\|b\|_A+\|\Omega(uv)\|_A+\|ub\|_1+\|av\|_1+\|\Gamma(uv)\|_1.
\end{align*}
On the other hand,
$$\|(a,u)\|\|(b,v)\|=\|a\|_A\|b\|_A+\|u\|_1\|b\|_A+\|a\|_A\|v\|_1+\|u\|_1\|v\|_1,$$
from which we now observe that $\|\cdot\|$ will be submultiplicative provided we can establish:
\begin{itemize}
\item[(i)]{$\|ub\|_1\leq\|u\|_1\|b\|_A$.}
\item[(ii)]{$\|av\|_1\leq\|v\|_1\|a\|_A$.}
\item[(iii)]{$\|\Omega(uv)\|_A+\|\Gamma(uv)\|_1\leq\|u\|_1\|v\|_1$.}
\end{itemize}
We proceed as follows:\\
(i) We can write
$$u=\sum_{i=1}^nu_{p_i,q_i}=\sum_{i=1}^n\sum_{j=1}^{k(i)}x_{i,j}p_i\otimes q_iy_{i,j},$$
where $k(i)\in\mathbb N$ for $i\in\{1,\dots,n\}$ and $x_{i,j},y_{i,j}\in A$. Multiplication dictates that
$$ub=\sum_{i=1}^n\sum_{j=1}^{k(i)}x_{i,j}p_i\otimes q_iy_{i,j}b.$$
Notice that, for each $i$, $\sum_{j=1}^{k(i)}x_{i,j}p_i\otimes q_iy_{i,j}b$ is a representation of the tensor $u_{p_i,q_i}b\in Ap_i\otimes q_iA.$ So it follows that  
$$\|u_{p_i,q_i}b\|_{\pi,p_i,q_i}\leq
\sum_{j=1}^{k(i)}\|x_{i,j}p_i\|_A\|q_iy_{i,j}b\|_A\leq
\|b\|_A\sum_{j=1}^{k(i)}\|x_{i,j}p_i\|_A\|q_iy_{i,j}\|_A.$$
But the preceding inequality is valid for any representation 
$$\sum_{j=1}^{k(i)}x_{i,j}p_i\otimes q_iy_{i,j}\mbox{ of }u_{p_i,q_i}\in Ap_i\otimes q_iA.$$
So we infer, by the definition of the projective tensor product, that
$$\|u_{p_i,q_i}b\|_{\pi,p_i,q_i}\leq\|b\|_A\|u_{p_i,q_i}\|_{\pi,p_i,q_i}.$$
This being true for each $i$ we obtain 
$$\|ub\|_1\leq\|u\|_1\|b\|_A.$$\\
(ii) Similar to (i).\\
(iii) With $u=\sum_{i=1}^nu_{p_i,q_i}$ and $v=\sum_{i=1}^mv_{\tilde p_i,\tilde q_i}$ we observe that
$$(0,u)(0,v)=(\Omega(uv),\Gamma(uv)).$$ Furthermore:
\begin{itemize}
\item[(a)]{$u_{p_i,q_i}v_{\tilde p_j,\tilde q_j}=0$ whenever $q_i\not=\tilde p_j$.}
\item[(b)]{$\Omega\left(u_{p_i,q_i}v_{\tilde p_j,\tilde q_j}\right)\in A$ whenever $q_i=\tilde p_j$ and $p_i=\tilde q_j$.}
\item[(c)]{$u_{p_i,q_i}v_{\tilde p_j,\tilde q_j}\in A_{J,\pi}$ whenever $q_i=\tilde p_j$ and $p_i\not=\tilde q_j$.}
\end{itemize}
Consider
$$\|u\|_1\|v\|_1=\sum_{i,j}\|u_{p_i,q_i}\|_{\pi,p_i,q_i}\|v_{\tilde p_j,\tilde q_j}\|_{\pi,\tilde p_j, \tilde q_j}.$$ Now we take any term in the sum on the right for which $q_i=\tilde p_j$. If the situation in (b) prevails then $p_i=\tilde q_j$, and with $u_{p_i,q_i}=\sum_{k=1}^nx_kp_i\otimes q_iy_k$ and 
$v_{\tilde p_j,\tilde q_j}=\sum_{l=1}^m\tilde y_lq_i\otimes p_i\tilde x_l$, it then follows that 
\begin{align*}
\|u_{p_i,q_i}v_{\tilde p_j,\tilde q_j}\|&:=
\|\Omega\left(u_{p_i,q_i}v_{\tilde p_j,\tilde q_j}\right)\|_A\\&=
\left\|\sum_{k,l}\Tr(q_iy_k\tilde y_lq_i)x_kp_i\tilde x_l\right\|_A\\&\leq
\sum_{k,l}\left|\Tr(q_iy_k\tilde y_lq_i)\right|\|x_kp_i\|_A\|p_i\tilde x_l\|_A\\&=
\sum_{k,l}\left|\rho(q_iy_k\tilde y_lq_i)\right|\|x_kp_i\|_A\|p_i\tilde x_l\|_A\\&\leq
\sum_{k,l}\|q_iy_k\|_A\|\tilde y_lq_i\|_A\|x_kp_i\|_A\|p_i\tilde x_l\|_A\\&=
\left(\sum_{k=1}^n\|x_kp_i\|_A\|q_iy_k\|_A\right)\left(\sum_{l=1}^m\|\tilde y_lq_i\|_A\|p_i\tilde x_l\|_A\right).
\end{align*}
By definition of the projective tensor product we conclude that
$$\|u_{p_i,q_i}v_{\tilde p_j,\tilde q_j}\|\leq\|u_{p_i,q_i}\|_{\pi,p_i,q_i}\|\|v_{\tilde p_j,\tilde q_j}\|_{\pi,\tilde p_j,\tilde q_j}.$$
If the situation in (c) occurs, then $p_i\not=\tilde q_j$, and we have 
\begin{align*}
\|u_{p_i,q_i}v_{\tilde p_j,\tilde q_j}\|&:=
\|u_{p_i,q_i}v_{\tilde p_j,\tilde q_j}\|_1\\&=
\left\|\sum_{k,l}\Tr(q_iy_k\tilde y_lq_i)x_kp_i\otimes\tilde q_j\tilde x_l\right\|_{\pi,p_i,\tilde q_j}\\&\leq
\sum_{k,l}\|q_iy_k\|_A\|\tilde y_lq_i\|_A\|x_kp_i\|_A\|\tilde q_j\tilde x_l\|_A\\&=
\left(\sum_{k=1}^n\|x_kp_i\|_A\|q_iy_k\|_A\right)\left(\sum_{l=1}^m\|\tilde y_l\tilde p_j\|_A\|\tilde q_j\tilde x_l\|_A\right).
\end{align*}
Again the definition of the projective tensor norms on $Ap_i\otimes_\pi q_iA$ and $A\tilde p_j\otimes_\pi\tilde q_jA$ implies that
 $$\|u_{p_i,q_i}v_{\tilde p_j,\tilde q_j}\|\leq\|u_{p_i,q_i}\|_{\pi,p_i,q_i}\|\|v_{\tilde p_j,\tilde q_j}\|_{\pi,\tilde p_j,\tilde q_j}.$$
 With the partial multiplicative inequalities established and the triangle inequality we now have:
 \begin{align*}
 \|\Omega(uv)\|_A+\|\Gamma(uv)\|_1&\leq
 \sum_{i,j}\|u_{p_i,q_i}v_{\tilde p_j,q_j}\|\\&\leq
  \sum_{i,j}\|u_{p_i,q_i}\|_{\pi,p_i,q_i}\|v_{\tilde p_j,\tilde q_j}\|_{\pi,\tilde p_j, \tilde q_j}\\&=
  \|u\|_1\|v\|_1,
 \end{align*}
 which proves (iii).
\end{proof}

It follows from Lemma~\ref{snorm} that the norm completion of  $A\oplus A_{J,\pi}$ is a Banach algebra. If we denote the completion by $\tilde A_S$ then of course
$$\tilde A_S=A\oplus\bar A_{J,\pi}.$$

It is unlikely that the unital Banach algebra $\tilde A_S$ is semisimple. So we factor out the radical to obtain the semisimple
$$A_S:=\tilde A_S\big/ \Rad(\tilde A_S).$$ 
We next want to show that $A$ is isometrically embeddable into $A_S$.
This will be easy once we have:
\begin{lemma}\label{sradical}
$$ \Rad(\tilde A_S)\subseteq\{(0,u):u\in \bar A_{J,\pi}\}.$$	
\end{lemma}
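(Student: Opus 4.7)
The plan is to construct enough continuous strictly irreducible Banach-space representations of $\tilde A_S$ to separate the first coordinate of every element of the radical. Since $A$ is semisimple, its continuous strictly irreducible Banach representations $\pi$ satisfy $\bigcap_\pi \ker\pi = \Rad(A) = 0$. These split into two types: (a) those with $\pi(\Soc(A)) = 0$; and (b) those equivalent (by minimal-ideal theory, using the orthogonality of the $J_p$) to the left regular representation $\pi_p: A \to \mathcal L(Ap)$ on the minimal left ideal $Ap$ for some $p \in \mathcal P$. Each will be extended to a continuous strictly irreducible representation of $\tilde A_S$, after which the standard fact that such representations annihilate the Jacobson radical finishes the argument.

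For type (a), set $\tilde\pi(x,u) := \pi(x)$; the product formula
\[
(x_1, u_1)(x_2, u_2) = \bigl(x_1 x_2 + \phi([u_1 u_2]_S),\ u_1 x_2 + x_1 u_2 + [u_1 u_2]_j\bigr)
\]
combined with $\phi([u_1 u_2]_S) \in \Soc(A) \subseteq \ker\pi$ makes $\tilde\pi$ multiplicative, and the image and strict irreducibility are inherited from $\pi$. Thus $\tilde\pi(\Rad(\tilde A_S)) = 0$ forces $\pi(x) = 0$ for every such $\pi$. For type (b), set $V := \bigoplus_{q \in \mathcal P} Aq$ with the $\ell^1$ direct sum norm; let $(a,0)$ act diagonally by left multiplication on each $Aq$, and let each elementary tensor $yp' \otimes qz \in A_J$ send $cq \in Aq$ to $\Tr(qz cq)\, yp' \in Ap'$ and zero on all other summands. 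A case analysis of which of the projections labelling two tensors coincide---using $J_p J_r = 0$ for distinct $p, r \in \mathcal P$ and $pAp = \mathbb C p$ for rank-one $p$---shows this prescription respects the multiplication of $\tilde A_S$, and the submultiplicative bounds developed in Lemma~\ref{snorm} yield continuity, so the prescription extends to a continuous homomorphism $\tilde\pi_p: \tilde A_S \to \mathcal L(V)$.

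Strict irreducibility of $\tilde\pi_p$ follows by observing that, from any nonzero $v \in V$ with component $v_{q_0} \neq 0$, the diagonal $A$-action exhausts $Aq_0$ in the $q_0$-slot, and then a single elementary tensor from $A_J$ transfers the result to any prescribed slot $p' \neq q_0$ with arbitrary target value, so $\tilde A_S \cdot v$ is dense in $V$. Hence $\tilde\pi_p(\Rad(\tilde A_S)) = 0$. Evaluated on a test vector supported in a single coordinate $q$, the off-diagonal contribution lands only in coordinates distinct from $q$, so the $q$-coordinate of $\tilde\pi_p(x, u)(v)$ coincides with the diagonal action $xv$; its vanishing forces $\pi_q(x) = 0$ for every $q \in \mathcal P$. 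Combined with the type (a) conclusion, $x$ lies in the kernel of every continuous strictly irreducible Banach representation of $A$, so $x \in \Rad(A) = 0$. Therefore $x = 0$ and $(x,u) \in \{(0,u) : u \in \bar A_{J,\pi}\}$ as required.

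The principal technical hurdle is the case-by-case verification that the type (b) prescription genuinely defines an algebra homomorphism on $\tilde A_S$: one must match the $[u_1 u_2]_S$ and $[u_1 u_2]_j$ decompositions of tensor products against the composition of diagonal-plus-off-diagonal operators on $V$, and then extend from the dense subalgebra $A \oplus A_J$ to its completion via the projective-tensor estimates already used in Lemma~\ref{snorm}.
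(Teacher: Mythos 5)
Your strategy is genuinely different from the paper's. The paper argues directly inside $\tilde A_S$: for $(a,u)\in\Rad(\tilde A_S)$ it forms products such as $(pz,0)\,(ax,ux)\,(p,0)$, observes that the second coordinate vanishes because every tensor in $A_{J,\pi}$ lies in some $Ap_i\otimes q_iA$ with $p_i\neq q_i$ while the ideals $J_p$ are mutually orthogonal, and then uses that an element of the form $(b,0)$ in the radical is quasinilpotent, together with the spectral characterisation of $\Rad(A)$, to force $a=0$. Your representation-theoretic route is workable in principle: I checked that your prescription for the action of elementary tensors on $V=\bigoplus_q Aq$ is compatible with the multiplication (\ref{seq2.1}) (the key identities being $J_pJ_r=\{0\}$ for $p\neq r$ and $qzcq=\Tr(qzcq)q$ for rank one $q$), and that it is contractive for the $\ell^1$/projective norms, so it extends to $\tilde A_S$; the type (a) extension and the final coordinate-projection argument are also fine.

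The genuine gap is the irreducibility step. What your orbit argument actually yields is that $\tilde\pi_p(\tilde A_S)v$ contains all \emph{finitely supported} vectors, hence is \emph{dense} in $V$: that is topological irreducibility. But the Jacobson radical is the intersection of the kernels of the \emph{strictly} (algebraically) irreducible representations, and a continuous, merely topologically irreducible representation of a Banach algebra need not annihilate the radical (the closed algebra generated by a quasinilpotent operator with no nontrivial closed invariant subspace is a radical algebra admitting a faithful topologically irreducible representation). So the inference from ``$\tilde A_S\cdot v$ is dense in $V$'' to ``$\tilde\pi_p(\Rad(\tilde A_S))=0$'' is invalid as stated. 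The gap is reparable: given $v\neq 0$ with $v_{q_0}\neq 0$, semisimplicity of $A$ yields $z$ with $\Tr(q_0zv_{q_0})=1$, and for a target $w\in V$ the element $u=\sum_{p'\neq q_0}w_{p'}\otimes q_0z$ has finite $\ell^1$/projective norm and satisfies $T_uv=\sum_{p'\neq q_0}w_{p'}$, with the remaining slot handled by the diagonal action or by routing through a second nonzero component of $v$; this gives exact surjectivity of the orbit map and hence strict irreducibility. Until that (or an equivalent reason why these particular representations kill the radical) is supplied, and until the deferred case-by-case verification of multiplicativity is actually carried out, the proof is incomplete.
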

\begin{proof}
Suppose $(a,u)\in \Rad(\tilde A_S)$. Then we can write $u=\lim_n u_n$ where, for each $n$, $u_n\in A_{J,\pi}$. Let $x\in A$ be arbitrary and observe that
$$(a,u)(x,0)=(ax,ux)\in \Rad(\tilde A_S).$$ So if $p$ is a rank one projection belonging to the class $\mathcal P$, and $z\in A$ is arbitrary then, since $ux=\lim_nu_nx$, we deduce that
$$(pz,0)(ax,ux)(p,0)=(pzaxp,0)\in \Rad(\tilde A_S).$$ It follows from the semisimplicity of $A$ that 
$$pzaxp=0\mbox{ for all }x,z\in A.$$ Thus by Jacobson's Lemma $\sigma_A(zaxp)=\{0\}$ for all $x,z\in A$ and, since $A$ is semisimple, we have that $axp=0$ for all $x\in A$. Therefore, since $p\in\mathcal P$ was arbitrary, we have that $axux=0$ for each $x\in A$, and hence that
$$(ax,0)(ax,ux)=((ax)^2,0)\in \Rad(\tilde A_S)$$ 
for each $x\in A$. But this means that 
$\sigma_A(ax)=\sigma_{\tilde A_S}((ax,0))=\{0\}$ holds for each $x\in A$ from which the semisimplicity of $A$ yields $a=0$.
\end{proof}	
\begin{theorem}
$A$ is isometrically embeddable into $A_S$.
\end{theorem}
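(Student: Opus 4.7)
The natural candidate for the embedding is the map
$$\iota : A \to A_S,\quad a \mapsto (a,0) + \Rad(\tilde A_S).$$
The plan is to verify, in order, that $\iota$ is (a) an algebra homomorphism, (b) injective, and (c) isometric; both (b) and (c) should fall out essentially directly from Lemma~\ref{sradical}.

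For (a), from the multiplication rule \eqref{seq2.1} we have $(a,0)(b,0) = (ab,0)$, so $a \mapsto (a,0)$ is an algebra homomorphism $A \to \tilde A_S$ (and sends $\mathbf 1$ to the identity of $\tilde A_S$); composing with the quotient map yields the required homomorphism $\iota$.

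For (b), suppose $\iota(a) = 0$, i.e.\ $(a,0) \in \Rad(\tilde A_S)$. By Lemma~\ref{sradical}, $\Rad(\tilde A_S) \subseteq \{(0,u) : u \in \bar A_{J,\pi}\}$, so the first coordinate of any radical element vanishes; hence $a = 0$. This gives injectivity.

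For (c), the quotient norm on $A_S$ is
$$\|\iota(a)\| = \inf\bigl\{\|(a,0) + r\| : r \in \Rad(\tilde A_S)\bigr\}.$$
By Lemma~\ref{sradical} every $r \in \Rad(\tilde A_S)$ has the form $(0,u)$, so $(a,0) + r = (a,u)$ and
$$\|(a,u)\| = \|a\|_A + \|u\|_1 \geq \|a\|_A,$$
with equality when $u = 0 \in \Rad(\tilde A_S)$. Taking the infimum gives $\|\iota(a)\| = \|a\|_A$, proving isometry.

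There is no real obstacle here beyond Lemma~\ref{sradical}, which carries all the weight: without the containment $\Rad(\tilde A_S)\subseteq\{(0,u)\}$ one would have to contend with radical elements of the form $(a,u)$ with $a\neq 0$, which could shrink the quotient norm below $\|a\|_A$ and destroy both injectivity and the isometry. The argument above simply assembles the previously established pieces.
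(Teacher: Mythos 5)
Your proposal is correct and follows essentially the same route as the paper: the same canonical map $a\mapsto(a,0)+\Rad(\tilde A_S)$, with Lemma~\ref{sradical} supplying the fact that radical elements have zero first coordinate, and the quotient-norm infimum computed exactly as in the paper (attained at $u=0$). The only difference is that you verify the homomorphism property and injectivity explicitly, which the paper leaves as obvious (and injectivity is in any case implied by the isometry).
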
	
\begin{proof}
	Let $T:A\to A_S$ be the canonical map given by 
	$$a\mapsto(a,0)+ \Rad(\tilde A_S).$$ It is obvious that $T$ is a homomorphism into $A_S$. It remains to prove that $T$ is an isometry. Using Lemma~\ref{sradical} we have
	\begin{align*}
	\|Ta\|&=\|(a,0)+ \Rad(\tilde A_S)\|\\&=
	\inf\{\|(a,0)+(0,u)\|:(0,u)\in \Rad(\tilde A_S)\}\\&=
	\inf\{\|a\|_A+\|u\|_1:(0,u)\in \Rad(\tilde A_S)\}\\&=
	\|a\|_A+\inf\{\|u\|_1:(0,u)\in \Rad(\tilde A_S)\}\\&=
	\|a\|_A,
	\end{align*}
	which gives the result.
\end{proof}
The Banach algebra $A_S$ is thus a semisimple extension of $A$. We want to show that $A_S$ satisfies Shoda's Theorem. Let $p$ be a rank one projection of $ \Soc(A)$, and consider the element $(p,0)+ \Rad(\tilde A_S)\in A_S$. Let $(x,u)+ \Rad(\tilde A_S)$ be arbitrary in $A_S$. Then,
\begin{align*}
&\left[(p,0)+ \Rad(\tilde A_S)][(x,u)+ \Rad(\tilde A_S)\right]\left[(p,0)+ \Rad(\tilde A_S)\right]\\&=(p,0)(x,u)(p,0)+ \Rad(\tilde A_S)\\&=(pxp,0)+ \Rad(\tilde A_S)\\&=(\lambda p,0)+ \Rad(\tilde A_S)\\&=\lambda\left[(p,0)+ \Rad(\tilde A_S)\right],
\end{align*}
which proves that $(p,0)+ \Rad(\tilde A_S)\in\Soc\,A_S$.\\
For any pair $p,q\in\mathcal P$ where $p\not=q$, we now consider the element  $(0,p\otimes q)+ \Rad(\tilde A_S)$. Since
$$\left[(p,0)+ \Rad(\tilde A_S)\right]\left[(0,p\otimes q)+ \Rad(\tilde A_S)\right] =(0,p\otimes q)+ \Rad(\tilde A_S),$$
it follows that $(0,p\otimes q)+ \Rad(\tilde A_S)$ is a rank one element of $A_S$. Moreover, since
\begin{align*}
&\left[(x,0)+ \Rad(\tilde A_S)\right]\left[(0,p\otimes q)+ \Rad(\tilde A_S)\right]\left[(y,0)+ \Rad(\tilde A_S)\right]\\
& =  (0,xp\otimes qy)+ \Rad(\tilde A_S),
\end{align*}
for all $x, y \in A$, it follows that $(0,xp\otimes qy)+ \Rad(\tilde A_S)$ has rank less than or equal to one for all $x, y \in A$. Together we have proved the following:
\begin{proposition}
	$\Soc(A)\oplus A_{J,\pi}+ \Rad(\tilde A_S)$ is a vector subspace, but not necessarily an ideal of $\Soc\,A_S$.
\end{proposition}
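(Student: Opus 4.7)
The plan is to establish two separate containments and then read off the subspace structure. First, the set $\Soc(A)\oplus A_{J,\pi}+\Rad(\tilde A_S)$ is the image of the vector subspace $\{(s,u):s\in\Soc(A),\ u\in A_{J,\pi}\}$ of $\tilde A_S$ under the quotient map, so it is automatically a vector subspace of $A_S$. Thus everything reduces to showing that every element of this set actually lies in $\Soc(A_S)$, which further reduces (by linearity) to the two cases $(s,0)+\Rad(\tilde A_S)$ with $s\in\Soc(A)$ and $(0,u)+\Rad(\tilde A_S)$ with $u\in A_{J,\pi}$.

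For the first case, I would use Lemma~\ref{s2.2} (together with Theorem~\ref{s2.4}) to write $s$ as a finite sum $s=\sum_k x_k p_k y_k$ with $p_k\in\mathcal P$. The canonical map $T\colon A\to A_S$ defined earlier is an algebra homomorphism, and the computation preceding the proposition already shows $T(p_k)=(p_k,0)+\Rad(\tilde A_S)\in\Soc(A_S)$. Because $\Soc(A_S)$ is a two-sided ideal of $A_S$, each $T(x_k)T(p_k)T(y_k)$ lies in $\Soc(A_S)$, and summing gives $(s,0)+\Rad(\tilde A_S)=T(s)\in\Soc(A_S)$.

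For the second case, I would invoke Lemma~\ref{s4.1} to write $u$ uniquely as a finite sum $\sum_i u_{p_i,q_i}$ with $p_i\neq q_i$, and then further expand each $u_{p_i,q_i}=\sum_j x_{i,j}p_i\otimes q_iy_{i,j}$. The displayed identity
\[
\bigl[(x,0)+\Rad(\tilde A_S)\bigr]\bigl[(0,p_i\otimes q_i)+\Rad(\tilde A_S)\bigr]\bigl[(y,0)+\Rad(\tilde A_S)\bigr]=(0,xp_i\otimes q_iy)+\Rad(\tilde A_S)
\]
appearing just before the proposition, together with the fact that $(0,p_i\otimes q_i)+\Rad(\tilde A_S)$ has rank $\leq 1$ in $A_S$, shows that each elementary summand lies in $\Soc(A_S)$. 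A finite linear combination then remains in $\Soc(A_S)$, proving the second case.

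The main subtlety is that this argument produces only the algebraic part $A_{J,\pi}$ and not the full completion $\bar A_{J,\pi}$; the image of the socle of $\tilde A_S$ may contain additional rank-one elements that arise as norm limits of finite tensor sums but are not themselves finite tensor sums. This is precisely why $\Soc(A)\oplus A_{J,\pi}+\Rad(\tilde A_S)$ need not be an ideal of $\Soc(A_S)$: left or right multiplication by such a ``limit'' rank-one element of $\Soc(A_S)$ may push an element of our subspace into the larger space $A\oplus\bar A_{J,\pi}+\Rad(\tilde A_S)$, which is why the proposition is stated with the qualifier \emph{not necessarily}. This qualifier therefore requires no separate proof beyond the above observation.
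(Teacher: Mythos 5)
Your argument is correct and takes essentially the same route as the paper: the paper's ``proof'' consists precisely of the two displayed computations preceding the proposition (showing $(p,0)+\Rad(\tilde A_S)\in\Soc(A_S)$ and that $(0,xp\otimes qy)+\Rad(\tilde A_S)$ has rank at most one), and your write-up just makes explicit the finite-sum and ideal-absorption steps that the paper compresses into ``Together we have proved.'' Like the authors, you treat the ``not necessarily an ideal'' clause as a caveat (the obstruction being that products can escape $A_{J,\pi}$ into $\bar A_{J,\pi}$) rather than as a claim demanding a counterexample, which is consistent with how the paper handles it.
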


So, to simplify, we shall from now on write $(a,u)$ for elements belonging to $A_S$, with the understanding that we actually mean $(a,u)+ \Rad(\tilde A_S)$.  

\begin{theorem}
The semisimple Banach algebra $A_S$ is Shoda-complete.	
\end{theorem}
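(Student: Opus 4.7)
The plan is to verify criterion (ii) of Theorem~\ref{sshodachar} for $A_S$, i.e.\ to show $\Soc(A_S) = J_{(p_0, 0)}$ for some (equivalently, every) $p_0 \in \mathcal P$; by that theorem this is equivalent to Shoda-completeness.

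First I would construct a $2\times 2$ system of matrix units in $A_S$ associated with any distinct $p, q \in \mathcal P$. The orthogonality $pq = qp = 0$ (inherited from $J_pJ_q=\{0\}$ in Lemma~\ref{s2.2}), combined with the multiplication~(\ref{seq2.1}) and $\Tr(p)=\Tr(q)=1$, gives
\[
(0, p\otimes q)(0, q\otimes p) = (p, 0), \qquad (0, q\otimes p)(0, p\otimes q) = (q, 0),
\]
together with $(0, p\otimes q)^2 = (0, q\otimes p)^2 = 0$. Setting $y = (0, p\otimes q) - (0, q\otimes p)$, one checks $y^2 = -(p+q, 0)$ and then the sine/cosine power-series expansion yields
\[
u := \exp\!\bigl(\tfrac{\pi}{2}y\bigr) = (\mathbf 1 - p - q, 0) + (0,p\otimes q) - (0,q\otimes p),
\]
which lies in $G_{\mathbf 1}(A_S)$ as an exponential, with inverse $(\mathbf 1 - p - q, 0) - (0, p\otimes q) + (0, q\otimes p)$. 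A short block-by-block computation using (\ref{seq2.1}) and the matrix-unit relations gives $u(p, 0)u^{-1} = (q, 0)$. Theorem~\ref{ssimorb} applied inside $A_S$ then yields $J_{(p, 0)} = J_{(q, 0)}$ for all $p, q \in \mathcal P$; call this common minimal two-sided ideal $J$.

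Next I would verify that every element of the algebraic skeleton $\Soc(A) \oplus A_{J, \pi}$ lies in $J$: for $a = \sum x_i p_i y_i \in \Soc(A)$ (each $p_i \in \mathcal P$) one has $(a, 0) = \sum (x_i, 0)(p_i, 0)(y_i, 0) \in J$, and for $u = \sum x_i p_i \otimes q_i y_i \in A_{J, \pi}$ one has $(0, u) = \sum (x_i, 0)(0, p_i \otimes q_i)(y_i, 0) \in J$, since both $(p_i, 0)$ and $(0, p_i \otimes q_i)$ are in $J$. Combined with the obvious $J \subseteq \Soc(A_S)$, this gives $\Soc(A) \oplus A_{J, \pi} \subseteq J \subseteq \Soc(A_S)$ as subsets of $A_S$.

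The main obstacle is the reverse inclusion $\Soc(A_S) \subseteq J$: a priori a rank-one projection of $A_S$ could arise from the completion $\bar A_{J, \pi}$ (together with the quotient by the radical) and fail to lie in $J$. I would handle this by contradiction: suppose $e \in \Soc(A_S)$ is a rank-one projection with $e \notin J$. Then by the orthogonality of distinct minimal two-sided ideals (Lemma~\ref{s2.2} applied in $A_S$), $eJ = Je = \{0\}$; lifting $e = (a, v) + \Rad(\tilde A_S)$ and expanding $e \cdot (p, 0), (p, 0) \cdot e, e \cdot (0, p \otimes q), (0, p \otimes q) \cdot e \in \Rad(\tilde A_S)$ through (\ref{seq2.1}) and Lemma~\ref{sradical}, one deduces $pa = ap = 0$ for all $p \in \mathcal P$ and that $v$'s action against every matrix unit vanishes modulo radical. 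The hard step is to then run the Jacobson/semisimplicity argument of Lemma~\ref{sradical} (promoting $pzaxp = 0$ through $\sigma_A(axp) = \{0\}$ to $axp = 0$, and then to $(ax)^2 = 0$ so $a = 0$) while simultaneously forcing $v \in \Rad(\tilde A_S)$ by approximating $v$ from $A_{J, \pi}$ and exploiting continuity of multiplication. Once $(a, v) \in \Rad(\tilde A_S)$, i.e.\ $e = 0$, the contradiction shows $\Soc(A_S) = J$, so $\Soc(A_S)$ is a minimal two-sided ideal and Theorem~\ref{sshodachar}(i) delivers Shoda-completeness of $A_S$.
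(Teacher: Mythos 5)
Your first two steps are sound and essentially reproduce the opening of the paper's own argument: the paper obtains $(q,0)\in J_{(p,0)}$ even more directly from the identity $(q,0)=(0,q\otimes p)(p,0)(0,p\otimes q)$, so your exponential similarity $u(p,0)u^{-1}=(q,0)$ is correct but more than is needed (and the appeal to Theorem~\ref{ssimorb} is then superfluous). The genuine gap is in your third step, which is where the entire difficulty of the theorem is concentrated. From $eJ=Je=\{0\}$ and Lemma~\ref{sradical} you can indeed extract $axpy=0$ for all $x,y\in A$ and $p\in\mathcal P$, i.e. $a\,\Soc(A)=\Soc(A)\,a=\{0\}$ for the $A$-component $a$ of a lift of $e$. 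But this does not force $a=0$: the socle need not be an essential ideal of $A$, and the ``Jacobson/semisimplicity argument of Lemma~\ref{sradical}'' cannot simply be re-run here, because that argument relies on $\Rad(\tilde A_S)$ being a two-sided ideal --- it is the membership $((ax)^2,0)\in\Rad(\tilde A_S)$, obtained by multiplying the radical element $(ax,ux)$ on the left by $(ax,0)$, that finally kills $a$. In your situation you only control products of $e$ with elements of $J$; the product $e(x,0)e$ is not such a product, and rank-one-ness of $e$ only yields $(a,v)(x,0)(a,v)\equiv\lambda_x(a,v)$ modulo the radical, not $0$. So the contradiction is not reachable by the route you sketch, and the sentence beginning ``The hard step is\dots'' is in effect a restatement of the theorem rather than a proof of it.

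The paper closes this step with a positive ``common element'' argument rather than a contradiction. Given an arbitrary rank one projection $(a,u)$ of $A_S$, it observes that $(0,u)\in\cl(\Soc(A_S))$ (since $u$ is a limit of elements of $A_{J,\pi}$, which land in $\Soc(A_S)$ by what amounts to your step 2), hence $(a,0)\in\cl(\Soc(A_S))$ and the ideal $J_a$ of $A$ is inessential. Theorem~\ref{sgencomp} then supplies finite-rank Riesz projections for $ax$ at nonzero spectral values; choosing $x$ with $\sigma(ax)\neq\{0\}$ and using a trace argument on the associated finite-rank Riesz projection produces a rank one projection $p_1$ of $A$ with $p_1axp_1=p_1$, whence $(p_1,0)\in J_{(a,u)}$. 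Since also $(p_1,0)\in J_{(p,0)}$ by your step 1, the two minimal two-sided ideals intersect nontrivially and must coincide. This use of inessentiality together with Theorem~\ref{sgencomp} is the missing idea; without it, or an equivalent substitute, your proposal does not close.
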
 
\begin{proof}
 Let $p$ be any rank one projection of $A$ such that $p\in\mathcal P$. Then $(p,0)$ is a rank one projection in $A_S$.  To prove the result it suffices to show that any rank one projection of $A_S$ belongs to the two-sided ideal generated by $(p,0)$ i.e. to $J_{(p,0)}$. First let $p^\prime$ be any other rank one projection of $A$. If $p^\prime\in E_p$, then there is a $w\in G_{\mathbf 1}(A)$ such that $p^\prime=wpw^{-1}$, and so $$(p^\prime,0)=(w,0)(p,0)(w^{-1},0)\in J_{(p,0)}.$$ If $p^\prime\notin E_p$, then $p^\prime=vqv^{-1}$ where $v\in G_{\mathbf 1}(A)$ and $q\in\mathcal P$ with $q\not=p$. 
   Observe now that $$(q,0)=(0,q\otimes p)(p,0)(0,p\otimes q)\in J_{(p,0)}, $$ whence 
   $$(p^\prime,0)=(v,0)(q,0)(v^{-1},0)\in J_{(p,0)}.$$ 
   Now let $(a,u)$ be an arbitrary rank one projection of $A_S$. Since $u$ is the limit of a sequence of elements in $A_{J,\pi}$, it follows by the preceding result that $(0,u)\in\cl(  \Soc(A_S))$. Hence, it follows that $(a,0)\in\cl(  \Soc(A_S))$. So the two-sided ideal generated by $a$, namely $J_a$, is inessential in $A$. It follows from Theorem~\ref{sgencomp} that for any member, say $b$, of $J_a$ the Riesz projections corresponding to the isolated nonzero spectral values of $b$ are finite rank elements of $A$.  Now take $x\in A$ such that $\sigma(ax)\not=\{0\}$. Then there is a finite rank projection, $p^\prime$, corresponding to $0\not=\lambda\in\sigma(ax)$ such that $\sigma(axp^\prime)=\{0,\lambda\}$ (or possibly $\{\lambda\}$ in the finite dimensional case). But we can write $p^\prime=\sum_{i=1}^kp_i$ where each $p_i$ is a rank one projection of $A$. If $p_iaxp_i=0$ for each $i$, then $\Tr(axp^\prime)=0$ which is absurd, since $\Tr(axp^\prime)$ is a positive integer multiple of $\lambda\not=0$. So we may assume without loss of generality that, say $p_1axp_1=\alpha p_1$, where $\alpha\not=0$ and moreover then that $\alpha=1$. Thus, if we take $(a,u)$ and then perform the operations (i) $(a,u)(x,0)=(ax,ux)$, (ii) $(p_1,0)(ax,ux)(p_1,0)=(p_1,0)$, we see that
$(p_1,0)\in J_{(a,u)}$. However, it was also shown in the first part of the proof that $(p_1,0)\in J_{(p,0)}$. So $J_{(a,u)}=J_{(p,0)}$, which shows that $\Soc\,A_S$ is a minimal two-sided ideal.
\end{proof}

\bibliographystyle{amsplain}
\bibliography{Spectral1}

\end{document}